
\documentclass[11pt,fleqn]{amsart}
\textheight 22cm
\textwidth 15cm
\voffset=-1.6cm
\hoffset=-1.0cm
\hfuzz=1pt


\usepackage{amssymb}
\usepackage{mathrsfs}
\usepackage[all]{xy}
\usepackage{color}
\usepackage{soul}
\usepackage{hyperref}
\usepackage{dsfont}


\setcounter{tocdepth}{2}




\newcommand\N{{\mathbb N}}
\newcommand\R{{\mathbb R}}

\newcommand\C{{\mathbb C}}

\newcommand\Z{{\mathbb Z}}

\newcommand\Sp{{\mathbb S}}


\def\AA{{\mathcal A}}
\def\BB{{\mathcal B}}
\def\CC{{\mathcal C}}
\def\DD{{\mathcal D}}
\def\EE{{\mathcal E}}

\def\LL{{\mathcal L}}

\def\NN{{\mathcal N}}

\def\RR{{\mathcal R}}

\def\BBB{{\mathscr B}}
\def\CCC{{\mathscr C}}
\def\DDD{{\mathscr D}}




\newtheorem{theo}{Theorem}
\newtheorem{prop}[theo]{Proposition}
\newtheorem{lem}[theo]{Lemma}
\newtheorem{cor}[theo]{Corollary}
\newtheorem{rem}[theo]{Remark}

\newtheorem{defin}[theo]{Definition}


\newcommand{\beqn}{\begin{equation}}
\newcommand{\eeqn}{\end{equation}}
\newcommand{\bear}{\begin{eqnarray}}
\newcommand{\eear}{\end{eqnarray}}
\newcommand{\bean}{\begin{eqnarray*}}
\newcommand{\eean}{\end{eqnarray*}}







\title[Exponential convergence for the Boltzmann equation without cut-off]{Exponential convergence to equilibrium for the homogeneous Boltzmann equation for hard potentials without cut-off}

\begin{document}

\author{{\sc Isabelle Tristani}}
\address{CEREMADE, Universit\'e Paris IX-Dauphine,
Place du Mar\'echal de Lattre de Tassigny, 75775 Paris Cedex 16, France. 
E-mail: {\tt tristani@ceremade.dauphine.fr }}

\date\today

\maketitle

\begin{abstract} 
This paper deals with the long time behavior of solutions to the spatially homogeneous Boltzmann equation. The interactions considered are the so-called (non cut-off and non mollified) hard potentials, we thus only deal with a moderate angular singularity. We prove an exponential in time convergence towards the equilibrium, improving results of Villani from \cite{Vill1} where a polynomial decay to equilibrium is proven. The basis of the proof is the study of the linearized equation for which we prove a new spectral gap estimate in a $L^1$ space with a polynomial weight by taking advantage of the theory of enlargement of the functional space for the semigroup decay developed by Gualdani et al. in \cite{GMM}. We then get our final result by combining this new spectral gap estimate with bilinear estimates on the collisional operator that we establish.
\end{abstract}

\vspace{1cm}
\textbf{Mathematics Subject Classification (2010)}: 76P05 Rarefied gas flows, Boltzmann equation; 
47H20 Semigroups of nonlinear operators; 
35B40 Asymptotic behavior of solutions.

\vspace{0.3cm}
\textbf{Keywords}: Boltzmann equation without cut-off; hard potentials; spectral gap; dissipativity; exponential rate of convergence; long-time asymptotic.

\vspace{0.5cm}
\tableofcontents

\newpage


\section{Introduction} 
\label{sec:intro}
\setcounter{equation}{0}
\setcounter{theo}{0}


\subsection{The model}
In the present paper, we investigate the asymptotic behavior of solutions to the spatially homogeneous
Boltzmann equation without angular cut-off, that is, for long-range interactions. Previous works have shown that these solutions converge towards the Maxwellian equilibrium with a polynomial rate when time goes to infinity. Here, we are interested in improving the rate of convergence and we show an exponential decay to equilibrium. 

We consider particles described by their space homogeneous distribution density \linebreak$f = f(t,v)$. We hence study the so-called spatially homogeneous Boltzmann equation:
\beqn \label{eq:Bol}
\partial_t f(t,v) = Q(f,f)(t,v), \quad v \in \R^3, \quad t \geq 0.
\eeqn
The Boltzmann collision operator is defined as
$$
Q(g,f):=\int_{\R^3 \times \Sp^2} B(v-v_*,\sigma) \left[g'_*f' - g_* f \right] \, d\sigma \, dv_*.
$$ 
Here and below, we are using the shorthand notations $f=f(v)$, $g_*=g(v_*)$, $f'=f(v')$ and $g'_*=g(v'_*)$. In this expression, $v$, $v_*$ and $v'$, $v'_*$ are the velocities of a pair of particles before and after collision. We make a choice of parametrization of the set of solutions to the conservation of momentum and energy (physical law of elastic collisions): 
$$
v+v_*=v'+v'_* ,
$$
$$
|v|^2+|v_*|^2=|v'|^2+ |v'_*|^2,
$$
so that the post-collisional velocities are given by:
$$
v'=\frac{v+v_*}{2} + \frac{|v-v_*|}{2} \sigma, \quad v'_*=\frac{v+v_*}{2} - \frac{|v-v_*|}{2} \sigma, \quad \sigma \in \Sp^2.
$$
The Boltzmann collision kernel $B(v-v_*,\sigma)$ only depends on the relative velocity $|v-v_*|$ and on the deviation angle $\theta$ through $\cos \theta = \langle \kappa, \sigma \rangle$ where $\kappa = (v-v_*)/|v-v_*|$ and $\langle \cdot, \cdot \rangle$ is the usual scalar product in $\R^3$. By a symmetry argument, one can always reduce to the case where $B(v-v_*, \sigma)$ is supported on $ \langle \kappa, \sigma \rangle \geq 0$ i.e $0 \leq \theta \leq \pi/2$. So, without loss of generality, we make this assumption. 

In this paper, we shall be concerned with the case when the kernel $B$ satisfies the following conditions:
\begin{itemize}
\item it takes product form in its arguments as
\beqn \label{eq:product}
B(v-v_*,\sigma) = \Phi(|v-v_*|) \, b(\cos \theta);
\eeqn
\item the angular function $b$ is locally smooth, and has a nonintegrable singularity for $\theta \rightarrow 0$, it satisfies for some $c_b>0$ and $s \in (0,1/2)$ (moderate angular singularity)
\beqn \label{eq:angularsing}
\forall \, \theta \in (0, \pi/2], \quad \frac{c_b}{\theta^{1+2s}} \leq \sin \theta \, b(\cos \theta) \leq \frac{1}{c_b \,  \theta^{1+2s}};
\eeqn
\item the kinetic factor $\Phi$ satisfies 
\beqn \label{eq:Phi}
\Phi(|v-v_*|)= |v-v_*|^\gamma \quad \text{with} \quad \gamma \in (0,1),
\eeqn
this assumption could be relaxed to assuming only that $\Phi$ satisfies $\Phi(\cdot) = C_\Phi \, |\cdot|^\gamma$ for some $\C_\Phi>0$. 
\end{itemize}

Our main physical motivation comes from particles interacting according to a repulsive potential of the form
\beqn \label{eq:potential}
\phi(r)=r^{-(p-1)}, \quad p \in (2,+\infty).
\eeqn

The assumptions made on $B$ throughout the paper include the case of potentials of the form~(\ref{eq:potential}) with $p>5$. Indeed, for repulsive potentials of the form~(\ref{eq:potential}), the collision kernel cannot be computed explicitly but Maxwell~\cite{Max} has shown that the collision kernel can be computed in terms of the interaction potential~$\phi$. More precisely, it satisfies the previous conditions (\ref{eq:product}), (\ref{eq:angularsing}) and (\ref{eq:Phi}) in dimension~$3$ (see~\cite{Cer,CIP,Vill2}) with $ s:=\frac{1}{p-1} \in (0,1)$ and $\gamma:= \frac{p-5}{p-1} \in (-3,1)$. 

One traditionally calls \emph{hard potentials} the case $p>5$ (for which $0<\gamma<1$), \emph{Maxwell molecules} the case $p=5$ (for which $\gamma=0$) and \emph{soft potentials} the case $2<p<5$ (for which $-3 < \gamma <0$). We can hence deduce that our assumptions made on $B$ include the case of hard potentials. 
\smallskip
The equation~(\ref{eq:Bol}) preserves mass, momentum and energy. Indeed, at least formally, we have:
$$
\int_{\R^3} Q(f,f)(v) \, \varphi(v) \, dv = 0 \quad \text{for} \quad \varphi(v) = 1,  v,  |v|^2;
$$
from which we deduce that a solution $f_t$ to the equation (\ref{eq:Bol}) is conservative, meaning that
\beqn \label{eq:conserv}
\forall \, t \geq0, \quad \int_{\R^3} f(t,v) \, \varphi(v) \, dv = \int_{\R^3} f_0(v) \, \varphi(v) \, dv \quad \text{for} \quad \varphi(v) = 1,  v,  |v|^2.
\eeqn

We introduce the entropy $H(f) = \int_{\R^3} f \, \log(f)$ and the entropy production $D(f)$.
Boltzmann's $H$ theorem asserts that 
\beqn \label{eq:entroprod}
\frac{d}{dt} H(f) = -D(f) \leq 0
\eeqn
and states that any equilibrium (i.e any distribution which maximizes the entropy) is a Maxwellian distribution $\mu_{\rho,u,T}$ for some $\rho>0$, $u \in \R^3$ and $T>0$:
$$
\mu^f(v) = \mu_{\rho,u,T}(v) = \frac{\rho \, e^{-\frac{|v-u|^2}{2T}}}{(2 \pi T)^{3/2}},
$$
where $\rho$, $u$ and $T$ are the mass, momentum and temperature of the gas:
$$
\rho := \rho_f = \int_{\R^3} f(v) \, dv, \quad u:= u_f = \frac{1}{\rho} \int_{\R^3} f(v) \, v \, dv, \quad T:= T_f = \frac{1}{3 \rho} \int_{\R^3} f(v) \, |v-u|^2 \, dv.
$$

Thanks to the conservation properties of the equation~(\ref{eq:conserv}), the following equalities hold:
$$
\rho_f = \rho_{f_0} , \quad u_f= u_{f_0}, \quad T_f= T_{f_0}
$$
where $f_0$ is the initial datum.

Moreover, a solution $f_t$ of the Boltzmann equation is expected to converge towards the Maxwellian distribution $\mu_{\rho,u,T}$ when $t \rightarrow + \infty$.

In this paper, we only consider the case of an initial datum satisfying
\beqn \label{eq:initialdatum}
\rho_{f_0}=1, \quad u_{f_0} = 0, \quad T_{f_0}=1,
\eeqn
one can always reduce to this situation (see~\cite{Vill1}). We then denote $\mu$ the Maxwellian with same mass, momentum and energy of $f_0$: $\mu(v)=(2 \pi)^{-3/2} e^{-|v|^2/2}$. 

\medskip
\subsection{Function spaces and notations}
We introduce some notations about weighted $L^p$ spaces. For some given Borel weight function $m \geq 0$ on $\R^3$, we define the Lebesgue weighted space $L^p(m)$, $1 \le p \le + \infty$, as the Lebesgue space associated to the norm
$$
\|h\|_{L^p(m)} := \|h\,m\|_{L^p}.
$$
We also define the weighted Sobolev space $W^{s,p}(m)$, $s \in \N$, $1 \le p < + \infty$, as the Sobolev space associated to the norm
$$
\|h\|_{W^{s,p}(m)} := \left( \sum_{|\alpha| \le s} \| \partial^\alpha h \|^p_{L^p(m)} \right)^{1/p}.
$$

Throughout this paper, we will use the same notation $C$ for positive constants that may change from line to line. Moreover, the notation $A \approx B$ will mean that there exist two constants $C_1$, $C_2>0$ such that $C_1 \, A \leq B \leq  C_2 \, A$.

\medskip
\subsection{Main results and known results}

\subsubsection{Convergence to equilibrium}
We first state our main result on exponential convergence to equilibrium.
\begin{theo} \label{th:main}
Consider a collision kernel $B$ satisfying conditions (\ref{eq:product}), (\ref{eq:angularsing}), (\ref{eq:Phi}) and $f_0$ a nonnegative distribution with finite mass, energy and entropy:
$$
f_0 \geq 0, \quad \int_{\R^3} f_0(v) \, (1+|v|^2) \, dv < \infty, \quad \int_{\R^3} f_0(v) \, \log(f_0(v)) \, dv < \infty  
$$
and satisfying (\ref{eq:initialdatum}). Then, if $f_t$ is a smooth solution (see Definition~\ref{def:sol}) to the equation~(\ref{eq:Bol}) with initial datum $f_0$, there exists a constant $C>0$ such that
$$
\forall \, t \geq 0, \quad \|f_t - \mu\|_{L^1} \leq C \, e^{-\lambda \, t}
$$
where $\lambda>0$ is defined in Theorem~\ref{th:mainlinear}.
\end{theo}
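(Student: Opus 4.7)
The plan is to linearize~(\ref{eq:Bol}) around the equilibrium $\mu$ by setting $h_t := f_t - \mu$, so that $h_t$ satisfies the perturbed equation
$$
\partial_t h_t = \LL h_t + Q(h_t,h_t), \qquad \LL g := Q(\mu,g)+Q(g,\mu).
$$
Thanks to the normalization~(\ref{eq:initialdatum}) together with the conservation laws~(\ref{eq:conserv}), the perturbation $h_t$ has zero mass, momentum, and energy for every $t\geq 0$, so it lies in the range of the spectral projection associated with the non-zero part of the spectrum of $\LL$. The main idea is then to treat the nonlinearity $Q(h,h)$ as a small perturbation of the semigroup generated by $\LL$, the decay of which is provided by Theorem~\ref{th:mainlinear}.

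More precisely, I would proceed in three steps. First, I apply Theorem~\ref{th:mainlinear} to obtain, in some $L^1(m)$ with $m$ a suitable polynomial weight, a decay estimate of the form $\|e^{t\LL}h_0\|_{L^1(m)} \leq C\,e^{-\lambda t}\|h_0\|_{L^1(m)}$ whenever $h_0$ has zero mass, momentum and energy. Second, I establish a bilinear estimate asserting that $Q:\;L^1(m)\times Y \to L^1(m)$ is continuous for a space $Y$ incorporating enough polynomial moments and fractional Sobolev regularity to compensate for the non-integrable angular singularity of $b$ and the velocity factor $|v-v_*|^\gamma$; the ``smooth solution'' hypothesis (Definition~\ref{def:sol}) should deliver a uniform-in-time bound $\sup_{t\geq 0}\|f_t\|_{Y} < \infty$, and hence a corresponding bound on $\|h_t\|_{Y}$. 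Third, and crucially, I use Villani's polynomial convergence result from~\cite{Vill1} as a starting point: combined with the uniform bounds in $Y$ and an interpolation with polynomial moments, it gives $\|h_t\|_{L^1(m)} \to 0$ as $t\to\infty$, so there exists $T_0$ such that $\|h_{T_0}\|_{L^1(m)}$ is arbitrarily small.

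From this point the argument is standard. Writing Duhamel's formula
$$
h_t = e^{(t-T_0)\LL}h_{T_0} + \int_{T_0}^{t} e^{(t-s)\LL}\,Q(h_s,h_s)\,ds, \qquad t \geq T_0,
$$
applying the linear decay estimate and the bilinear estimate, and setting $u(t) := e^{\lambda t}\|h_t\|_{L^1(m)}$, one gets an inequality of the form
$$
u(t) \leq C\,\|h_{T_0}\|_{L^1(m)}\,e^{\lambda T_0} + C \sup_{s\geq T_0}\|h_s\|_{Y} \int_{T_0}^{t} u(s)\,ds.
$$
Since $\|h_{T_0}\|_{L^1(m)}$ can be chosen small while the integral factor is bounded independently of $t$ once $\sup_s \|h_s\|_Y$ is under control, a continuation/bootstrap argument (or a direct Grönwall estimate after absorbing the small constant) yields $u(t) \leq C$, which is exactly the claimed exponential decay $\|f_t-\mu\|_{L^1}\leq \|f_t-\mu\|_{L^1(m)}\leq C\,e^{-\lambda t}$.

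The main obstacle I expect is the bilinear estimate. Since $Q$ involves a non-integrable angular singularity, one cannot hope to control $\|Q(h,h)\|_{L^1(m)}$ by $\|h\|_{L^1(m)}^2$ alone, and one must split $Q$ along the angular variable in order to isolate the singular part and compensate it with the fractional regularity contained in $Y$, while simultaneously handling the extra polynomial weight $m$ (whose interaction with the pre/post-collisional velocity change requires careful computation in the Carleman-type representation). Once such an estimate is established, compatible with the $L^1(m)$ space used in Theorem~\ref{th:mainlinear}, the rest of the proof is a perturbative argument in the spirit of~\cite{GMM}.
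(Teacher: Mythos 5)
Your overall strategy is the same as the paper's: set $h_t=f_t-\mu$, note $\Pi h_t=0$, use the $L^1(m)$ spectral gap of Theorem~\ref{th:mainlinear}, a bilinear estimate on $Q$ compatible with $L^1(m)$, Villani's polynomial decay (Theorem~\ref{th:polydecay}) interpolated against the uniformly bounded moments~(\ref{eq:momentprod}) to enter a small neighborhood at some time $T_0$, and then Duhamel. The gap is in your closing step. The integral inequality you write,
$$
u(t) \leq C\,\|h_{T_0}\|_{L^1(m)}\,e^{\lambda T_0} + C \,\Big(\sup_{s\geq T_0}\|h_s\|_{Y}\Big) \int_{T_0}^{t} u(s)\,ds,
$$
is \emph{linear} in $u$ with a coefficient of order $K:=\sup_s\|h_s\|_Y$, and $K$ is only bounded, not small: it comes from the $H^N$ smoothness bounds of the smooth solution, not from Villani's theorem, so it cannot be made small by waiting longer. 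Gr\"onwall applied to this inequality gives $u(t)\lesssim \|h_{T_0}\|_{L^1(m)}\,e^{CK(t-T_0)}$, i.e.\ a decay rate $\lambda-CK$ which may well be negative; the smallness of $\|h_{T_0}\|_{L^1(m)}$ cannot be ``absorbed'' here, because in a linear integral inequality the growth rate is dictated by the coefficient, not by the size of the initial datum. As written, neither Gr\"onwall nor a continuation argument yields $u(t)\leq C$, so the claimed exponential decay does not follow from your displayed estimate.

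The missing ingredient is the interpolation that the paper performs in Proposition~\ref{prop:bilinear} and Corollary~\ref{cor:bilinear2}, which is then exploited in Theorem~\ref{th:nonlinearstability}: the large regularity norm must enter only through a fractional power, paired with a fractional power of the quantity that Villani's theorem makes small, namely
$$
\|Q(h,h)\|_{L^1(m)} \leq C\Big( \|h\|_{L^1(m)}^{3/2}\,\|h\|_{L^1(\langle v \rangle^{2\gamma}m)}^{1/2} + \|h\|_{L^1(m)}^{3/2}\,\|h\|_{H^4(\langle v \rangle^{4\gamma+k+6})}^{1/2}\Big),
$$
so that Duhamel produces an integrand of the form $\big(K^{1/2}\eta^{1/4}+\eta^{3/4}\big)\,\|h_s\|^{5/4}_{L^1(m)}$. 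There the prefactor is small (for fixed $K$, choose $\eta$ small via Theorem~\ref{th:polydecay} interpolated with the moments), and the superlinear power $5/4$ is what allows the continuation argument of \cite[Lemma~4.5]{Mou2} to close and give the rate $\lambda$ itself rather than $\lambda - CK$. A minor further remark: you do not need fractional Sobolev regularity in the bilinear estimate; since $s\in(0,1/2)$, a first-order Taylor expansion combined with the cancellation lemma of \cite{ADVW} controls the angular singularity, so $W^{1,1}$ weighted norms (interpolated up to the $H^4$ bound provided by the smooth solution) suffice — that is a simplification of detail, not a gap.
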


We improve a polynomial result of Villani~\cite{Vill1} and generalize to our context similar exponential results known for simplified models. Mouhot in~\cite{Mou2} proved such a result for the spatially homogeneous Boltzmann equation with hard potentials and Grad's cut-off. Carrapatoso in~\cite{Carr} recently proved exponential decay to equilibrium for the homogeneous Landau equation with hard potentials which is the grazing collisions limit of the model we study in the present paper. Let us also mention the paper of Gualdani et al.~\cite{GMM} where an exponential decay to equilibrium is proved for the inhomogeneous Boltzmann equation for hard spheres (see also~\cite{MM1,MisS,MM2} for related works).  

\smallskip

It is a known fact that our equation~(\ref{eq:Bol}) admits solutions which are conservative and satisfy some suitable properties of regularity, we will call them \emph{smooth solutions}. We here precise the meaning of this term and give an overview of results on the Cauchy theory of our equation. 
\begin{defin} \label{def:sol}
Let $f_0$ be a nonnegative function defined on $\R^3$ with finite mass, energy and entropy. We shall say that $(t,v) \mapsto f(t,v)$ is a \emph{smooth solution} to the equation~(\ref{eq:Bol}) if the following conditions are fulfilled:
\begin{itemize}
\item $f \geq 0$, $f \in \CC(\R^+, L^1)$;
\item for any $t \geq 0$,
$$
\int_{\R^3} f(t,v) \, \varphi(v) \, dv = \int_{\R^3} f_0(v) \, \varphi(v) \, dv \quad \text{for} \quad \varphi(v) = 1,  v,  |v|^2
$$
and
$$
\int_{\R^3} f(t,v) \, \log(f(t,v)) \,dv  + \int_0^t D(f(s,\cdot)) \, ds \leq  \int_{\R^3} f_0(v) \, \log(f_0(v)) \, dv \,;  
$$
where $D(f)$ is the entropy production defined in~(\ref{eq:entroprod});
\item for any $\varphi \in \CC^1(\R^+, \DDD(\R^3))$ and for any $t\geq 0$,
$$
\begin{aligned}
\int_{\R^3} f(t,v) \, \varphi(t,v) \, dv 
&= \int_{\R^3} f_0(v) \, \varphi(0,v) \, dv + \int_0^t \int_{\R^3} f(\tau, v) \, \partial_t \varphi(\tau,v)  \, dv \, d\tau \\
&\quad + \int_0^t \int_{\R^3} Q(f,f) (\tau, v) \, \varphi(\tau, v) \, dv \, d\tau 
\end{aligned}
$$
where the last integral is define through the following formula
$$
\int_{\R^3} Q(f,f)(v) \, \varphi(v) \, dv = \frac{1}{4} \int_{\R^3 \times \R^3 \times \Sp^2} B(v-v_*,\sigma) \, [f'_* f' - f_* f] \, (\varphi + \varphi_* - \varphi' - \varphi'_*) \, d\sigma \, dv_* \, dv;
$$
\item for any $t_0>0$ and for any $\ell \in \R^+$,
\beqn \label{eq:momentprod}
\sup_{t \geq t_0} \|f(t, \cdot)\|_{L^1(\langle v \rangle^\ell)} < \infty \,;
\eeqn
\item for any $t_0>0$ and for any $N$, $\ell \in \R^+$,
\beqn \label{eq:sobolevprod}
\sup_{t \geq t_0} \|f(t, \cdot)\|_{H^N(\langle v \rangle^\ell)} < \infty .
\eeqn
\end{itemize}
\end{defin}

Such a solution is known to exist. The problem of existence of solutions was first studied by Arkeryd in~\cite{Ark} where existence of solutions is proven for not too soft potentials, that is $\gamma >-1$ (Goudon~\cite{Gou} and Villani~\cite{Vill3} then improved this result enlarging the class of $\gamma$ considered). We mention that uniqueness of solution for hard potentials can be proved under some more restrictive conditions on the initial datum, see the paper of Desvillettes and Mouhot~\cite{DM} where $f_0$ is supposed to be regular ($f_0 \in W^{1,1}(\langle v \rangle^2)$) and the paper of Fournier and Mouhot~\cite{FM} where $f_0$ is supposed to be localized ($\int_{\R^3} f_0 \, e^{a|v|^\gamma}  dv < \infty$ for some $a>0$) for hard potentials.

Concerning the moment production property, it was discovered by Elmroth~\cite{Elm} and Desvillettes~\cite{Desv1} and improved by Wennberg~\cite{Wenn}, which justifies our point~(\ref{eq:momentprod}) in the definition of a smooth solution. We here point out the fact that this property is not anymore true for Maxwell molecules or soft potentials. As a consequence, our method, which relies partially on this property, works only for hard potentials.

Finally,  we mention papers where regularization results are proven for ``true" (that is non mollified) physical potentials: \cite{ADVW} by Alexandre et al. and \cite{CH} by Chen and He where the initial datum is supposed to have finite energy and entropy, \cite{BF} by Bally and Fournier where only the $2D$ case is treated and \cite{Four} by Fournier under others conditions on the initial datum. Theorem~1.4 from \cite{CH} explains our point~(\ref{eq:sobolevprod}). 



\smallskip

We now recall previous results on convergence to equilibrium for solutions to equation~(\ref{eq:Bol}). It was first studied by Carlen and Carvalho \cite{CC1,CC2} and then by Toscani and Villani \cite{ToscVill2}. Up to now, the best rate of convergence in our case was obtained by Villani in~\cite{Vill1}:
\begin{theo} \label{th:polydecay}
Let us consider $f_t$ a smooth solution to (\ref{eq:Bol}) with an initial datum $f_0$ satisfying (\ref{eq:initialdatum}) with finite entropy. Then $f_t$ satisfies the following polynomial decay to equilibrium: for any $t_0>0$ and any $\varepsilon>0$, there exists $C_{t_0,\varepsilon}>0$ such that 
$$
\forall \, t \geq t_0, \quad \|f_t - \mu\|_{L^1} \le C_{t_0,\varepsilon} \, t^{-\frac{1}{ \varepsilon}}.
$$
\end{theo}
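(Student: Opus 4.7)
The plan is to follow the classical entropy / entropy-production strategy of Villani. Denote the relative entropy by $H(f|\mu) := \int_{\R^3} f \log(f/\mu) \, dv$. Because $f_0$ and $\mu$ share the same hydrodynamic moments, the $H$-theorem~(\ref{eq:entroprod}) together with the conservation laws~(\ref{eq:conserv}) yields
\[
\frac{d}{dt} H(f_t|\mu) = -D(f_t),
\]
and the Csisz\'ar--Kullback--Pinsker inequality gives $\|f_t-\mu\|_{L^1}^2 \leq 2\, H(f_t|\mu)$. Thus it suffices to prove a polynomial decay estimate for $H(f_t|\mu)$ itself.

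The core of the proof is a quantitative entropy / entropy-production inequality of ``almost-Cercignani'' type: for every $\eta > 0$ there exists a constant $K_\eta = K_\eta(f) > 0$ such that, under moment and smoothness bounds on $f$,
\[
D(f) \;\geq\; K_\eta \, H(f|\mu)^{\,1+\eta}.
\]
The constant $K_\eta$ is controlled in terms of upper bounds on arbitrary polynomial moments of $f$, a lower bound on its mass, and Sobolev-type regularity estimates — precisely the quantities supplied by Definition~\ref{def:sol}, in particular the moment production~(\ref{eq:momentprod}) and the regularization~(\ref{eq:sobolevprod}). The reason one cannot take $\eta = 0$ (which would yield exponential decay of the entropy) is the Bobylev--Cercignani obstruction: one can construct distributions for which $D(f)/H(f|\mu)$ becomes arbitrarily small, so the loss of an $\eta$ exponent is structurally unavoidable in this approach.

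With this inequality in hand, for $t \geq t_0$ one obtains the scalar differential inequality
\[
\frac{d}{dt} H(f_t|\mu) \;\leq\; -K_\eta \, H(f_t|\mu)^{1+\eta},
\]
which integrates to $H(f_t|\mu) \leq C_\eta\, t^{-1/\eta}$. Combining this with Csisz\'ar--Kullback--Pinsker and relabeling (taking $\eta = 1/(2\varepsilon)$ and absorbing constants into $C_{t_0,\varepsilon}$) yields the stated estimate $\|f_t-\mu\|_{L^1} \leq C_{t_0,\varepsilon}\, t^{-1/\varepsilon}$.

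The main technical obstacle is establishing the entropy-entropy production inequality for the non-cutoff hard-potential kernel. The angular singularity $b(\cos\theta) \sim \theta^{-(1+2s)}$ from~(\ref{eq:angularsing}) provides the fractional coercivity needed to compensate the degeneracy of the kinetic factor $|v-v_*|^\gamma$ near $v=v_*$, but extracting an explicit constant $K_\eta$ with the correct dependence on high moments and Sobolev norms requires a careful Carleman-representation analysis in the spirit of Villani's ``Cercignani-like'' inequalities, relying on Povzner-type estimates and on transferring information between $f$ and the Gaussian $\mu$ through a logarithmic Sobolev inequality. It is at this step that the \emph{a priori} bounds~(\ref{eq:momentprod}) and~(\ref{eq:sobolevprod}) from the definition of a smooth solution enter in an essential way, since they guarantee that $K_\eta$ can be chosen uniformly for $t \geq t_0 > 0$.
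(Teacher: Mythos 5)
Your overall strategy is exactly the one the paper uses: Boltzmann's $H$-theorem plus an ``almost Cercignani'' entropy--entropy production inequality $D(f)\geq K_\eta\,H(f|\mu)^{1+\eta}$, integration of the resulting differential inequality, and the Csisz\'ar--Kullback--Pinsker inequality. (The paper does not reprove the functional inequality either; it simply cites Theorem~4.1 of Villani's paper \cite{Vill1}.) However, there is one concrete gap in your account of \emph{when} that inequality applies. You assert that the constant $K_\eta$ is controlled by polynomial moment bounds, Sobolev regularity, and ``a lower bound on its mass.'' That last item is not the right hypothesis. Villani's Theorem~4.1 requires a \emph{pointwise} Maxwellian-type lower bound
$$
f(v) \geq K_0\, e^{-A_0 |v|^{q_0}}, \qquad K_0, A_0>0,\ q_0\geq 2,
$$
uniformly in $v$, and the constant $K_\eta$ degenerates without it. A lower bound on the mass alone is far too weak; one can have unit mass and still have $f$ vanish on large sets, which destroys the entropy production estimate. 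Establishing this pointwise lower bound for solutions of the \emph{non-cutoff} equation is itself a nontrivial theorem --- it is Theorem~1.2 of Mouhot \cite{Mou1}, which guarantees such a bound uniformly for $t\geq t_0$ with an exponent $q_0$ depending on $s$. Without invoking (or reproving) this result, your argument does not close: you cannot claim $K_\eta$ is uniform for $t\geq t_0$, and hence cannot integrate the differential inequality.

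A second, minor point: your relabeling of exponents is off. From $H(f_t|\mu)\leq C\,t^{-1/\eta}$ and CKP you get $\|f_t-\mu\|_{L^1}\leq C\,t^{-1/(2\eta)}$, so to reach the rate $t^{-1/\varepsilon}$ you must take $\eta=\varepsilon/2$, not $\eta=1/(2\varepsilon)$ (the latter gives the much slower rate $t^{-\varepsilon}$). This is only a bookkeeping slip, but as written the final estimate does not follow.
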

This result comes from~\cite[Theorem~4.1]{Vill1} which states that if $f$ is a function which satisfies the following lowerbound
\beqn \label{eq:lowerbound}
\forall \, v \in \R^3, \quad f(v) \geq K_0 \, e^{-A_0|v|^{q_0}} \quad \text{with} \quad K_0, A_0 >0, \, q_0 \geq 2
\eeqn
then for any $\varepsilon>0$, there exists an explicit constant $K_\varepsilon>0$ such that 
\beqn \label{eq:entropy}
D(f) \geq K_\varepsilon \, H(f | \mu)^{1+\varepsilon}.
\eeqn
 
It is a result from Mouhot~\cite[Theorem 1.2]{Mou1} that the lowerbound~(\ref{eq:lowerbound}) holds for any smooth solution $f_t$ of our equation~(\ref{eq:Bol}). Let us mention that lowerbounds of solutions were first studied by Carleman~\cite{Carl} (for hard spheres) and then by Pulvirenti and Wennberg~\cite{PulWenn} (for hard potentials with cut-off). Finally, Mouhot~\cite{Mou1} extended these results to the spatially inhomogeneous case without cut-off. We here state Theorem 1.2 from~\cite{Mou1} that we use:
for any $t_0>0$ and for any exponent $q_0$ such that 
$$
q_0>2 \frac{\log\left(2 + \frac{2s}{1-s} \right)}{\log 2},
$$
a smooth solution $f_t$ to~(\ref{eq:Bol}) satisfies 
$$
\forall \, t \geq t_0, \quad \forall \, v  \in \R^3, \quad f(t,v) \geq K_0 \, e^{-A_0 |v|^{q_0}}
$$
for some $K_0$, $A_0 >0$. 

We can then deduce that the conclusion of Theorem~\ref{th:polydecay} holds using the Csisz\'{a}r-Kullback-Pinsker inequality $\|f-\mu\|_{L^1} \leq \sqrt{2 H(f|\mu)}$ combined with the result of Villani~(\ref{eq:entropy}).

\smallskip
Let us here emphasize that the method of Villani to prove the polynomial convergence towards equilibrium is purely nonlinear. Ours is based on the study of the linearized equation. 
 
\smallskip
\subsubsection{The linearized equation}
We introduce the linearized operator. Considering the linearization $f=\mu+h$, we obtain at first order the linearized equation around the equilibrium~$\mu$
\beqn \label{eq:linearop}
\partial_t h = \LL h := Q(\mu,h)+ Q(h,\mu),
\eeqn
for $h=h(t,v)$, $v \in \R^3$. The null space of the operator $\LL$ is the $5$-dimensional space
\beqn \label{eq:null}
\NN(\LL) = \text{Span} \left\{ \mu, \mu \, v_1, \mu \, v_2, \mu \, v_3, \mu \,  |v|^2 \right\}.
\eeqn

Our strategy is to combine the polynomial convergence to equilibrium and a spectral gap estimate on the linearized operator to show that if the solution enters some stability neighborhood of the equilibrium, then the convergence is exponential in time. Previous results on spectral gap estimates hold only in $L^2(\mu^{-1/2})$ and the Cauchy theory for the nonlinear Boltzmann equation is constructed in $L^1$-spaces with polynomial weight. In order to link the linear and the nonlinear theories, our approach consists in proving new spectral gap estimates for the linearized operator $\LL$ in spaces of type $L^1(\langle v \rangle^k)$. To do that, we exhibit a convenient splitting of the linearized operator in such a way that we may use the abstract theorem from~\cite{GMM} which allows us to enlarge the space of spectral estimates of a given operator. 

Here is the result we obtain on the linearized equation which provides a constructive spectral gap estimate for $\LL$ in $L^1(\langle v \rangle^k)$ and which is the cornerstone of the proof of Theorem~\ref{th:main}.

\smallskip
\begin{theo} \label{th:mainlinear}
Let $k>2$ and a collision kernel $B$ satisfying (\ref{eq:product}), (\ref{eq:angularsing}) and (\ref{eq:Phi}). Consider the linearized Boltzmann operator $\LL$ defined in~(\ref{eq:linearop}). Then for any positive $\lambda < \min(\lambda_0,\lambda_k)$ (where $\lambda_0$ is the spectral gap of $\LL$ in $L^2(\mu^{-1/2})$ defined in Proposition~\ref{prop:L2} and $\lambda_k$ is a constant depending on $k$ defined in Lemma~\ref{lem:Bdissip}), there exists an explicit constant $C_\lambda >0$, such that for any $h \in L^1(\langle v \rangle^k)$, we have the following estimate
\beqn \label{eq:sgdecay}
\forall \, t \geq 0, \quad \|S_\LL(t) h -  \Pi h \|_{L^1(\langle v \rangle^k)} \leq C_\lambda \, e^{-\lambda t } \|h - \Pi h\|_{L^1(\langle v \rangle^k)},
\eeqn
where $S_\LL(t)$ denotes the semigroup of $\LL$ and $\Pi$ the projection onto $\NN(\LL)$. 
\end{theo}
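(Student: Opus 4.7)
The plan is to invoke the enlargement-of-the-functional-space theorem of Gualdani, Mischler and Mouhot~\cite{GMM}, which transfers a semigroup decay estimate from a small reference space $E := L^2(\mu^{-1/2})$ (in which Proposition~\ref{prop:L2} provides the spectral gap $\lambda_0$) to the large Banach space $\EE := L^1(\langle v \rangle^k)$. To feed that machine, I need to exhibit a splitting $\LL = \AA + \BB$ such that $\BB + \lambda$ is dissipative in $\EE$ for every $\lambda < \lambda_k$, the operator $\AA$ is bounded (and in fact regularising) from $\EE$ into $E$, and some iterated time-convolution of $\AA S_\BB$ with itself maps $\EE$ into $E$ with exponential decay in $t$.

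For the splitting, I would use two truncation parameters: an angular parameter $\delta > 0$ which isolates the non-integrable singularity of $b(\cos\theta)$ near $\theta = 0$, and a velocity parameter $R > 1$ realised by a smooth cut-off $\chi_R$ supported in $\{|v| \le R\}$. Schematically, one sets
\beqn
\AA h(v) := \chi_R(v) \int_{\R^3 \times \Sp^2} \mathbf{1}_{\theta \geq \delta}\, B(v-v_*,\sigma)\,\bigl[\mu'_* h' + h'_* \mu' - \mu_* h - h_* \mu\bigr]\, \chi_R(v_*)\, d\sigma\, dv_*,
\eeqn
and $\BB := \LL - \AA$. The compact support in $(v, v_*)$, the angular cut-off and the Gaussian factor $\mu$ appearing in every term of $\AA$ make it bounded from $L^1(\langle v \rangle^k)$ into $L^2(\mu^{-1/2})$ and even into any $H^N(\mu^{-1/2})$, which yields the required regularising property after taking sufficiently many convolutions. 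The operator $\BB$ then carries the angularly singular part of the gain, the truncated-out-by-$\chi_R$ part, plus the full multiplicative loss $-\nu(v)\, h$ where $\nu(v) \asymp \langle v \rangle^\gamma$.

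The heart of the argument, and the step I expect to be the main obstacle, is the dissipativity of $\BB$ in the weighted $L^1$ space (Lemma~\ref{lem:Bdissip}). One starts from
\beqn
\frac{d}{dt}\|h_t\|_{L^1(\langle v \rangle^k)} \leq \int_{\R^3} (\BB h_t)(v) \, \mathrm{sign}(h_t(v)) \, \langle v \rangle^k \, dv,
\eeqn
symmetrises the collision integral and handles the surviving singular gain (with $\theta \le \delta$) via a second-order Taylor expansion in the pre-collisional variables, exploiting the smoothness of $\mu$ to tame the $\theta^{-1-2s}$ singularity. The assumption $s \in (0, 1/2)$ is crucial here: it keeps the singular contribution strictly below the one produced by the loss term $-\nu(v)\,\langle v \rangle^k$, yielding an explicit rate $\lambda_k$ after a careful tuning of $\delta$ small and $R$ large in terms of $k$.

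Once these three ingredients are in place, GMM's abstract theorem applies: iterating the Duhamel factorisation $S_\LL(t) = S_\BB(t) + (S_\LL * \AA S_\BB)(t)$, combining the spectral gap of $\LL$ in $E$ with the exponential decay of $S_\BB$ in $\EE$, and noting that $\NN(\LL)$ lies in $E$ so that the spectral projection $\Pi$ extends canonically to $\EE$, one reaches the announced estimate~(\ref{eq:sgdecay}) with any $\lambda < \min(\lambda_0, \lambda_k)$.
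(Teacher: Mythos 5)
Your overall strategy is the same as the paper's: split $\LL$ into a truncated part $\AA$ (bounded, regularising into $L^2(\mu^{-1/2})$) plus a remainder $\BB$ that is hypodissipative in $L^1(\langle v\rangle^k)$, and then invoke the enlargement theorem of Gualdani--Mischler--Mouhot with $E=L^2(\mu^{-1/2})$, $\EE=L^1(\langle v\rangle^k)$ and the spectral gap $\lambda_0$ of Proposition~\ref{prop:L2}. However, there is a genuine gap in the step you yourself identify as the heart of the matter, namely the dissipativity of $\BB$ with an explicit negative rate $\lambda_k$. Your $\BB$ contains not only the grazing part ($\theta\le\delta$) and the loss term $-\nu(v)h$, but also the \emph{non-grazing gain terms} $\mu'_*h'+h'_*\mu'$ on the region cut out by $\chi_R$ (large $|v|$ or $|v_*|$). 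After the sign trick and the pre-post collisional change of variables, these produce contributions of the form $\int B^c_\delta\,\mu_*\,|h|\,\bigl(\langle v'\rangle^k+\langle v'_*\rangle^k\bigr)$, which at large velocities are of exactly the same order as the loss term $\int \nu\,|h|\,\langle v\rangle^k$; they are not small in $\delta$, $R$ or $\varepsilon$, and the loss term does not dominate them termwise. The net negative rate only emerges from a Povzner-type moment inequality comparing $\int_{\Sp^2}\bigl[\langle v'\rangle^k+\langle v'_*\rangle^k-\langle v\rangle^k-\langle v_*\rangle^k\bigr]b^c_\delta\,d\sigma$ with $-C'_k|v|^k$ plus lower-order terms (the paper's Lemma~\ref{lem:Povzner}, adapted from Mischler--Wennberg), combined with the fact that the loss constant $K_\delta\approx\delta^{-2s}$ can be made larger than the Povzner constant by taking $\delta$ small. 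This is also precisely where the hypothesis $k>2$ enters, which your argument never uses. Your description of the dissipativity proof addresses only the singular gain with $\theta\le\delta$ (where, as you say, $s<1/2$ makes its contribution $O(\delta^{1-2s})$ small), so as written the lemma giving $\lambda_k$ does not follow.

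Two secondary remarks. First, your truncation $\chi_R(v)\chi_R(v_*)\mathbf{1}_{\theta\ge\delta}$ omits the restriction to $|v-v_*|\ge\varepsilon$ and $|\cos\theta|\le 1-\varepsilon$ used in the paper's $\Theta_\varepsilon$; the boundedness of the truncated gain operator from $L^1(\langle v\rangle^k)$ into $L^2$ (via the Carleman representation, as in Lemma~4.16 of \cite{GMM}) is not obvious without these, so you should either add them or prove the mapping property for your weaker truncation. Second, no iterated convolution is needed here: since $\AA$ maps $L^1(\langle v\rangle)$ into compactly supported $L^2$ functions, a single factor $T(t)=\AA S_{\BB}(t)$ already satisfies $\|T(t)\|_{\BBB(\EE,E)}\le C e^{at}$, which is all the abstract theorem requires.
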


Let us briefly review the existing results concerning spectral gap estimates for $\LL$. Pao~\cite{Pao} studied spectral properties of the linearized operator for hard potentials by non-constructive and very technical means. This article was reviewed by Klaus~\cite{Klaus}. Then, Baranger and Mouhot gave the first explicit estimate on this spectral gap in~\cite{BM} for hard potentials ($\gamma>0$). If we denote $\DD$ the Dirichlet form associated to $-\LL$:
$$
\DD(h):=\int_{\R^3} (-\LL h) \, h \, \mu^{-1},
$$
and $\NN(\LL)^\perp$ the orthogonal of $\NN(\LL)$ defined in~(\ref{eq:null}) and $\Pi$ the projection onto $\NN(\LL)$, the Dirichlet form $\DD$ satisfies 
\beqn \label{eq:entropyL2}
\forall \, h \in \NN(\LL)^\perp, \quad \DD(h) \geq \lambda_0 \, \|h\|^2_{L^2(\mu^{-1/2})},
\eeqn
for some constructive constant $\lambda_0>0$. This result was then improved by Mouhot~\cite{Mou3} and later by Mouhot and Strain~\cite{MS}. In the last paper, it was conjectured that a spectral gap exists if and only if $\gamma+2s \geq 0$. This conjecture was finally proven by Gressman and Strain in~\cite{GS}. 


\smallskip

Another question would be to obtain similar results in other spaces: $L^p$ spaces with $1 < p \leq 2$ and a polynomial weight or $L^p$ spaces with $1 \leq p \leq 2$ and a stretched exponential weight. Our computations do not allow to conclude in those cases, more precisely, we are not able to do the computations which allow to obtain the suitable splitting of the linear operator in order to apply the theorem of enlargement of the space of spectral estimates. As a consequence, we can not prove the existence of a spectral gap on those spaces. However, we believe that such results may hold.

We here point out that the knowledge of a spectral gap estimate in $L^1(\langle v \rangle^k)$ for the fractional Fokker-Planck equation (see~\cite{Tris}) is consistent with our result.
Indeed, the behavior of the Boltzmann collision operator has been widely conjectured to be that of a fractional diffusion (see~\cite{Desv4,Gou,Vill3}).

\medskip
\noindent\textbf{Acknowledgments.} We thank St\'{e}phane Mischler for fruitful discussions and his encouragement.

\bigskip

\section{The linearized equation} 
\label{sec:lin}
\setcounter{equation}{0}
\setcounter{theo}{0}



Here and below, we denote $m(v):=\langle v \rangle ^k$ with $k>2$. The aim of the present section is to prove Theorem~\ref{th:mainlinear}. To do that, we exhibit a splitting of the linearized operator into two parts, one which is bounded and the second one which is dissipative. We can then apply the abstract theorem of enlargement of the functional space of the semigroup decay from Gualdani et al.~\cite{GMM} (see Subsection~\ref{subsec:L1}).

\medskip

\subsection{Notations}

We now introduce notations about spectral theory of unbounded operators. For a given real number $a \in \R$, we define the half complex plane
$$
\Delta_a := \left\{ z \in \C, \, \Re e \, z > a \right\}.
$$

\smallskip
For some given Banach spaces $(E,\|\cdot \|_E)$ and $(\EE,\| \cdot
\|_\EE)$, we denote by $\mathscr{B}(E, \EE)$ the space of bounded linear
operators from $E$ to $\EE$ and we denote by $\| \cdot
\|_{\mathscr{B}(E,\EE)}$ or $\| \cdot \|_{E \to \EE}$ the associated norm
operator. We write $\mathscr{B}(E) = \mathscr{B}(E,E)$ when $E=\EE$.
We denote by $\mathscr{C}(E,\EE)$ the space of closed unbounded linear
operators from $E$ to $\EE$ with dense domain, and $\mathscr{C}(E)=
\mathscr{C}(E,E)$ in the case $E=\EE$.

\smallskip
For a Banach space $X$ and   $\Lambda \in \mathscr{C}(X)$ we denote by $S_\Lambda(t)$, $t \ge
0$, its semigroup, by $\mbox{D}(\Lambda)$ its domain, by
$\mbox{N}(\Lambda)$ its null space and by $\mbox{R}(\Lambda)$ its range. We also
denote by $\Sigma(\Lambda)$ its spectrum, so that for any $z$ belonging to the resolvent set $\rho(\Lambda) :=  \C
\backslash \Sigma(\Lambda)$  the operator $\Lambda - z$ is invertible
and the resolvent operator
$$
\RR_\Lambda(z) := (\Lambda -z)^{-1}
$$
is well-defined, belongs to $\mathscr{B}(X)$ and has range equal to
$\mbox{D}(\Lambda)$.
An eigenvalue $\xi \in
\Sigma(\Lambda)$ is said to be isolated if
\[
\Sigma(\Lambda) \cap \left\{ z \in \C, \,\, |z - \xi| \le r \right\} =
\{ \xi \} \ \mbox{ for some } r >0.
\]
In the case when $\xi$ is an isolated eigenvalue, we may define
$\Pi_{\Lambda,\xi} \in \mathscr{B}(X)$ the associated spectral projector by
$$
\Pi_{\Lambda,\xi} := - {1 \over
  2i\pi} \int_{ |z - \xi| = r' } \RR_\Lambda (z) \, dz
$$
with $0<r'<r$. Note that this definition is independent of the value
of $r'$ as the application
$
\C \setminus \Sigma(\Lambda) \to \mathscr{B}(X)$, $z \to \RR_{\Lambda}(z)$ is holomorphic.
For any $\xi \in \Sigma(\Lambda)$ isolated, it is well-known (see~\cite{Kato} paragraph III-6.19) 
that $\Pi_{\Lambda,\xi}^2=\Pi_{\Lambda,\xi}$,  so that $\Pi_{\Lambda,\xi}$ is indeed a projector.

\smallskip
When moreover the so-called ``algebraic eigenspace" $\mbox{R}(\Pi_{\Lambda,\xi})$ is finite dimensional we say that
$\xi$ is a discrete eigenvalue, written as $\xi \in \Sigma_d(\Lambda)$. 

 \medskip

\subsection{Spectral gap in $L^2(\mu^{-1/2})$} \label{subsec:L2} 
We here state a direct consequence of inequality~(\ref{eq:entropyL2}) from~\cite{BM}, which gives us a spectral gap estimate in $L^2(\mu^{-1/2})$. 
\begin{prop} \label{prop:L2}
There is a constructive constant $\lambda_0>0$ such that 
$$
\forall \, t \geq 0, \quad \forall \, h \in L^2(\mu^{-1/2}), \quad \|S_\LL(t) h-  \Pi h\|_{L^2(\mu^{-1/2})} \leq e^{-\lambda_0 t} \|h - \Pi h\|_{L^2(\mu^{-1/2})}.
$$
\end{prop}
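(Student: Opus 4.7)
The plan is to deduce the semigroup decay from the Dirichlet form coercivity estimate~(\ref{eq:entropyL2}) by a standard Hilbertian energy argument. The key structural fact is that $\LL$ is self-adjoint and nonpositive on $L^2(\mu^{-1/2})$, a classical consequence of the weak formulation of $Q$ combined with the micro-reversibility of the collision process. Consequently $\Pi$ is precisely the orthogonal projection onto $\NN(\LL)$, it commutes with the semigroup $S_\LL(t)$, and the orthogonal complement $\NN(\LL)^\perp$ is stable under $S_\LL(t)$.

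Writing $g := h - \Pi h \in \NN(\LL)^\perp$, one has $S_\LL(t) h - \Pi h = S_\LL(t) g$, so the desired estimate reduces to
$$
\|S_\LL(t) g\|_{L^2(\mu^{-1/2})} \leq e^{-\lambda_0 t} \|g\|_{L^2(\mu^{-1/2})} \quad \text{for all } g \in \NN(\LL)^\perp.
$$
For $g \in \mbox{D}(\LL) \cap \NN(\LL)^\perp$, set $g_t := S_\LL(t) g$; by the invariance of $\NN(\LL)^\perp$ one has $g_t \in \NN(\LL)^\perp$ for every $t \geq 0$. Differentiating the squared norm yields
$$
\frac{d}{dt} \|g_t\|^2_{L^2(\mu^{-1/2})} = 2 \langle \LL g_t, g_t \rangle_{L^2(\mu^{-1/2})} = -2 \DD(g_t) \leq -2 \lambda_0 \|g_t\|^2_{L^2(\mu^{-1/2})},
$$
where the last inequality is exactly~(\ref{eq:entropyL2}). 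Gr\"onwall's lemma then gives the announced exponential decay on the domain, and a standard density argument extends it to arbitrary $g \in \NN(\LL)^\perp$.

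There is essentially no obstacle here: the entire content of the proposition is contained in~(\ref{eq:entropyL2}), and it only remains to translate a coercivity inequality on the Dirichlet form into an $L^2$-contraction on the semigroup via self-adjointness. The only mildly nontrivial point to record is the self-adjointness of $\LL$ in $L^2(\mu^{-1/2})$, which guarantees that $\Pi$ is orthogonal and that $\NN(\LL)^\perp$ is left invariant by $S_\LL(t)$; this is well established in the literature on the linearized Boltzmann operator.
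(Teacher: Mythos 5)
Your argument is correct and is exactly the deduction the paper has in mind: Proposition~\ref{prop:L2} is stated there as a direct consequence of the coercivity estimate~(\ref{eq:entropyL2}) from~\cite{BM}, and your Gr\"onwall/self-adjointness argument is the standard way to pass from that Dirichlet form inequality to the semigroup decay on $\NN(\LL)^\perp$. No gap to report.
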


\medskip
\subsection{Splitting of the linearized operator}

We first split the linearized operator $\LL$ defined in~(\ref{eq:linearop}) into two parts, separating the grazing collisions and the cut-off part, we define 
$$
b_\delta := \mathds{1}_{\theta \leq \delta} \, b \quad  \text{and} \quad b^c_\delta := \mathds{1}_{\theta \geq \delta} \, b
$$ 
for some $\delta\in (0,1)$ to be chosen later, it induces the following splitting of $\LL$:
$$
\begin{aligned}
\LL h&= \LL_\delta h+ \LL^c_\delta h \\
&=: \int_{\R^3 \times \Sp^2} \left[\mu'_* h' - \mu_* h + h'_* \mu' - h_* \mu \right] \, b_\delta(\cos \theta) |v-v_*|^\gamma \, d\sigma \, dv_* \\
&\quad +  \int_{\R^3 \times \Sp^2} \left[\mu'_* h' - \mu_* h + h'_* \mu' - h_* \mu \right] \, b^c_\delta(\cos \theta) |v-v_*|^\gamma \, d\sigma \, dv_*.
\end{aligned}
$$
In the rest of the paper, we shall use the notations 
$$
B_\delta(v-v*, \sigma) := b_\delta(\cos \theta) \, |v-v_*|^\gamma \quad \text{and} \quad B^c_\delta(v-v_*, \sigma) := b^c_\delta(\cos \theta)\, |v-v_*|^\gamma.
$$

As far as the cut-off part is concerned, our strategy is similar as the one adopted in~\cite{GMM} for hard-spheres. For any $\varepsilon \in (0,1)$, we consider $\Theta_\varepsilon \in \CC^\infty$ bounded by one, which equals one on 
$$
\left\{|v| \le \varepsilon^{-1} \text{ and } 2\varepsilon \le |v-v_*| \le \varepsilon^{-1} \text{ and } | \cos \, \theta | \le 1- 2\varepsilon \right\}
$$
and whose support is included in
$$
\left\{|v| \le 2\varepsilon^{-1} \text{ and } \varepsilon \le |v-v_*| \le 2\varepsilon^{-1} \text{ and } | \cos \, \theta | \le 1- \varepsilon \right\}.
$$
We then denote the truncated operator
$$
\AA_{\delta,\varepsilon} (h) :=   \int_{\R^3 \times \Sp^2} \! \Theta_\varepsilon \! \left[\mu'_*\,  h' \, + \, \mu' \, h'_*- \mu \,  h_* \right] \, b^c_\delta(\cos \theta) |v-v_*|^\gamma  \, d\sigma \, dv_*
$$
and the corresponding remainder operator
$$
\BB^{c}_{\delta,\varepsilon} (h) := \int_{\R^3 \times \Sp^2} \! (1 -\Theta_\varepsilon) \! \left[\mu'_*\,  h' \, + \, \mu' \, h'_*- \mu \,  h_* \right] \, b^c_\delta(\cos \theta) |v-v_*|^\gamma  \, d\sigma \, dv_*.
$$
We also introduce 
$$
\nu_\delta(v) :=  \int_{\R^3 \times \Sp^2}  \mu_* \, b^c_\delta(\cos \theta) |v-v_*|^\gamma  \, d\sigma \, dv_*,
$$ 
so that we have the following splitting: $\LL_\delta^c = \AA_{\delta,\varepsilon} + \BB_{\delta,\varepsilon}^c - \nu_\delta$.

Moreover, $\nu_\delta$ satisfies 
$$
\nu_\delta(v) = K_\delta \,(\mu \ast | \cdot|^\gamma)(v) 
$$
with
$$
 K_\delta := \int_{\Sp^2} b^c_\delta(\cos \theta) \, d\sigma \approx \int_\delta^{\pi/2} b(\cos \theta) \, \sin \theta \, d\theta \approx \delta^{-2s} - \left(\frac{\pi}{2}\right)^{-2s} \xrightarrow[\delta \rightarrow 0]{} +\infty 
$$
using the spherical coordinates to get the second equality and~(\ref{eq:angularsing}) to get the final one; and
$$
(\mu \ast | \cdot|^\gamma)(v) \approx \langle v \rangle^\gamma.
$$

We finally define 
$$
 \BB_{\delta,\varepsilon} := \LL_\delta + \BB^c_{\delta,\varepsilon} - \nu_\delta
$$
so that $\LL = \AA_{\delta,\varepsilon} + \BB_{\delta,\varepsilon}$.

\smallskip
\subsubsection{Dissipativity properties} \label{subsubsec:dissip}
\begin{lem} \label{lem:Ldeltadissip}
There exists a function $\varphi_k(\delta)$ depending on $k$ and tending to $0$ as $\delta$ tends to $0$ such that for any $h \in L^1(\langle v \rangle^\gamma m)$, the following estimate holds:
\beqn \label{eq:Ldeltadissip}
\int_{\R^3} \LL_\delta (h) \, \mathrm{sign} (h) \, m \, dv \leq \varphi_k(\delta) \, \|h\|_{L^1(\langle v \rangle^\gamma m)}.
\eeqn
\end{lem}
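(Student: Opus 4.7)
The plan is to pair $\mathcal{L}_\delta h$ against $\psi := \mathrm{sign}(h)\, m$ and extract, via Taylor expansion together with the grazing angular estimate, a factor $\varphi_k(\delta)$ tending to $0$ as $\delta \to 0$. First, by the pre-/post-collisional change of variables applied to the gain terms, I would write the weak form
$$
\int_{\mathbb{R}^3} \mathcal{L}_\delta(h)\,\psi \, dv \;=\; \int B_\delta(v-v_*,\sigma)\,[\mu_*\,h + h_*\,\mu]\,(\psi' - \psi)\, d\sigma\, dv_*\, dv,
$$
and split this into $I_1$ (with coefficient $\mu_* h$) and $I_2$ (with coefficient $h_* \mu$).

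For $I_1$, I would apply the sign inequality $h\,(\psi' - \psi) \le |h|\,(m' - m)$, which follows from $h\,\mathrm{sign}(h')\,m' \le |h|\,m'$ combined with the exact identity $h\,\mathrm{sign}(h)\,m = |h|\,m$. Combined with the first-order Taylor bound $|m(v') - m(v)| \le C\, \sin(\theta/2)\, |v-v_*|\, (\langle v \rangle^{k-1} + \langle v_* \rangle^{k-1})$ (using $v' - v = \tfrac{|v-v_*|}{2}(\sigma - \kappa)$) and the angular smallness $\int_{\mathbb{S}^2} b_\delta(\cos\theta)\sin(\theta/2)\,d\sigma \lesssim \delta^{1-2s}$, which is finite and vanishing as $\delta \to 0$ since $s < 1/2$, then absorbing the polynomial $v_*$-weights against the Gaussian decay of $\mu_*$, one obtains $I_1 \le C\,\delta^{1-2s}\,\|h\|_{L^1(\langle v\rangle^\gamma m)}$.

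For $I_2$, where the sign inequality fails directly because $h_*$ lives at $v_*$ while $\psi$ depends on $v$, the plan is to pass to the fully symmetric form
$$
\int \mathcal{L}_\delta(h)\,\psi \, dv \;=\; \tfrac{1}{2} \int B_\delta\,[\mu_* h + h_* \mu]\,[\psi' + \psi'_* - \psi - \psi_*]\, d\sigma\,dv_*\,dv,
$$
obtained by averaging the standard weak form with its image under $(v,v_*,\sigma)\mapsto(v_*,v,-\sigma)$ (which preserves $B_\delta$ by the angular symmetry and swaps $v' \leftrightarrow v'_*$). The collision identity $v'_* - v_* = -(v' - v)$ kills the first-order Taylor contribution to $m(v') + m(v'_*) - m(v) - m(v_*)$ after $\sigma$-integration, leaving a factor $\int b_\delta(1-\cos\theta)\,d\sigma \lesssim \delta^{2-2s}$. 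Keeping the pairs $(\psi'-\psi)$ and $(\psi'_*-\psi_*)$ grouped together (so as not to lose cancellation) and applying the sign inequality only to the diagonal blocks $\mu_* h(\psi'-\psi)$ and $h_* \mu(\psi'_* - \psi_*)$, while controlling the residual cross-terms by decomposing $\psi' - \psi = \mathrm{sign}(h)(m'-m) + m'[\mathrm{sign}(h') - \mathrm{sign}(h)]$ and noting that the sign-change piece is pointwise $\le 0$ after pairing with $h$, yields a bound of the same order $C\,\delta^{1-2s}\,\|h\|_{L^1(\langle v\rangle^\gamma m)}$.

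The main technical obstacle is exactly the control of the sign-change residuals in $I_2$: crude application of the $|\mathrm{sign}| \le 1$ bound to the cross term $h_*\,[\mathrm{sign}(h')-\mathrm{sign}(h)]\,m'$ produces an angular integral proportional to $\int b_\delta\, d\sigma = +\infty$, so the cancellation must be extracted before any triangle inequality is used; this is the reason for passing to the fully symmetric form above and working with the combined bracket $\psi'+\psi'_*-\psi-\psi_*$ rather than with each $\psi'-\psi$ separately. Setting $\varphi_k(\delta) := C\,\delta^{1-2s}$, which tends to $0$ as $\delta \to 0$ by the assumption $s<1/2$, completes the proof.
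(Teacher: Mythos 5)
Your treatment of the $Q_\delta(\mu,h)$ part (your $I_1$) is correct and essentially the paper's argument in streamlined form: the pointwise inequality $h(\psi'-\psi)\le |h|(m'-m)$, the gradient bound $|m'-m|\lesssim \sin\theta\,|v-v_*|(\langle v\rangle^{k-1}+\langle v_*\rangle^{k-1})$, and the angular factor $\int_0^\delta b(\cos\theta)\sin^2\theta\,d\theta\approx\delta^{1-2s}$ are exactly the ingredients used there. The gap is in your $I_2$, i.e.\ the $Q_\delta(h,\mu)$ part. After symmetrizing, the cross blocks are $\int B_\delta\, h_*\,\mu\,(\psi'-\psi)$ and $\int B_\delta\, \mu_*\,h\,(\psi'_*-\psi_*)$, and in these the density factor sits at a \emph{different} velocity than the test-function difference. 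Your decomposition $\psi'-\psi=\mathrm{sign}(h)(m'-m)+m'[\mathrm{sign}(h')-\mathrm{sign}(h)]$ only yields a sign when the sign-change piece is paired with $h$; paired with $h_*$ (as in the cross block) it has no sign, and $|\mathrm{sign}(h')-\mathrm{sign}(h)|$ has no smallness in $\theta$ for a general $L^1$ function $h$, so the angular integral $\int b_\delta\,d\sigma$ diverges. The symmetrization you invoke does not rescue this: the first-order cancellation in $\psi'+\psi'_*-\psi-\psi_*$ after $\sigma$-integration relies on Taylor expansion, hence on smoothness of the test function, and applies to the weight $m$ but not to $\psi=\mathrm{sign}(h)\,m$. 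You correctly identify this as the obstacle, but the proposal supplies no mechanism that actually extracts the needed cancellation, so the proof is incomplete precisely at its hardest point.

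The paper closes this gap by never differencing the test function on the $Q_\delta(h,\mu)$ part. It writes the integrand as $h'_*(\mu'-\mu)+(h'_*-h_*)\mu$ and treats the two pieces separately: for $(h'_*-h_*)\mu$ the singular angular integral is computed exactly by the cancellation lemma of Alexandre--Desvillettes--Villani--Wennberg (\cite[Lemma~1]{ADVW}), which turns it into a convolution $(S_\delta\ast h)\mu$ with $S_\delta(z)\le C\,\delta^{2-2s}|z|^\gamma$; for $h'_*(\mu'-\mu)$ the smallness in $\theta$ comes from the smoothness of $\mu$ (via $M=\sqrt{\mu}$ Lipschitz), combined with the pre-post collisional change of variables and a further change of variables $v\mapsto v'$ (with Jacobian bounded below since $\theta\le\pi/2$) to handle the weights evaluated at $v'$. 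Only after these steps are absolute values of $h$ taken, so the angular singularity is already tamed. Some device of this kind — the cancellation lemma or an equivalent exact computation of the $\sigma$-integral of the difference acting on $h$ — is what your argument is missing; without it, the cross sign-change term cannot be bounded.
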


\begin{proof}
Let us first introduce a notation which is going to be useful in the sequel of the proof:
\beqn \label{eq:kappa}
\kappa_\delta := \int_0^{\pi/2} b_\delta (\cos \theta) \, \sin^2(\theta) \, d\theta = \int_0^\delta b(\cos\theta) \, \sin^2(\theta) \, d\theta \approx  \delta^{1-2s}  \xrightarrow[\delta \rightarrow 0]{} 0,
\eeqn
where the last equality comes from~(\ref{eq:angularsing}). We here underline the fact that considering a moderate singularity, meaning $s \in (0,1/2)$, is here needed to get the convergence of $\kappa_\delta$ to~$0$ as $\delta$ goes to $0$. 

\smallskip
We split $\LL_\delta$ into two parts in the following way:
$$
\begin{aligned}
\LL_\delta h &= \int_{\R^3 \times \Sp^2} \left[\mu'_*\,  h' \, - \mu_* \,  h \right] \, b_\delta(\cos \theta) |v-v_*|^\gamma  \, d\sigma \, dv_* \\
&\quad + \int_{\R^3 \times \Sp^2} \left[h'_*\,  \mu' \, - h_* \,  \mu \right] \, b_\delta(\cos \theta) |v-v_*|^\gamma  \, d\sigma \, dv_*  \\
&=: \LL^1_\delta h + \LL^2_\delta h,
\end{aligned}
$$
this splitting corresponds to the splitting of $\LL_\delta$ as $Q_\delta(\mu,h)+Q_\delta(h,\mu)$ if $Q_\delta$ denotes the collisional operator associated to the kernel $B_\delta$. 

\smallskip
We first deal with $\LL^1_\delta$. Let us recall that we have $\mu \, \mu_* = \mu' \, \mu'_*$. In the following computation, we denote $g:=h \, \mu^{-1}$:
$$
\begin{aligned}
& \quad \int_{\R^3} \LL^1_\delta (h) \, \text{sign} (h) \, m \, dv \\
&= \int_{\R^3 \times \R^3 \times \Sp^2} B_\delta (v-v_*,\sigma)  \, \mu \, \mu_* \, \left[g' - g \right] \, \text{sign}(g) \, m \, d\sigma \, dv_* \, dv \\
&= \int_{\R^3 \times \R^3 \times \Sp^2} B_\delta (v-v_*,\sigma)  \, \mu \, \mu_* \, \left[g' - g \right] \, \left[\text{sign}(g) - \text{sign}(g')\right] \, m \, d\sigma \, dv_* \, dv \\
&\quad + \int_{\R^3 \times \R^3 \times \Sp^2} B_\delta (v-v_*,\sigma) \, \mu \, \mu_* \, \left[g' - g \right] \, \text{sign}(g') \, m \, d\sigma \, dv_* \, dv \\
&\leq \int_{\R^3 \times \R^3 \times \Sp^2} B_\delta (v-v_*,\sigma) \, \mu \, \mu_* \, \left[g' - g \right] \, \text{sign}(g') \, m \, d\sigma \, dv_* \, dv,
\end{aligned}
$$
where we used that for any $a$, $b \in \R$, $(a-b)(\text{sign}(a)-\text{sign}(b)) \leq 0$ to get the last inequality. 
\begin{rem}
We here emphasize that this computation is particularly convenient in the $L^1$ case since $\text{sign}(h) = \text{sign}(g)$. In the $L^p$ case, it is trickier and for now, we are not able to adapt it to get the wanted estimates. 
\end{rem}

We now use the classical pre-post collisional change of variables to pursue the computation:
$$
\begin{aligned}
\int_{\R^3} \LL^1_\delta (h) \, \text{sign} (h) \, m \, dv
&\leq \int_{\R^3 \times \R^3 \times \Sp^2} B_\delta (v-v_*,\sigma) \, \mu \, \mu_* \, \left[g - g' \right] \, \text{sign}(g) \, m' \, d\sigma \, dv_* \, dv \\
&= \int_{\R^3 \times \R^3 \times \Sp^2} B_\delta (v-v_*,\sigma) \, \mu \, \mu_* \, \left[g - g' \right] \, \text{sign}(g) \, (m' - m) \, d\sigma \, dv_* \, dv \\
&\quad + \int_{\R^3 \times \R^3 \times \Sp^2} B_\delta (v-v_*,\sigma) \, \mu \, \mu_* \, \left[g - g' \right] \, \text{sign}(g) \, m \, d\sigma \, dv_* \, dv. \\
&= \int_{\R^3 \times \R^3 \times \Sp^2} B_\delta (v-v_*,\sigma) \, \mu \, \mu_* \, \left[g - g' \right] \, \text{sign}(g) \, (m' - m) \, d\sigma \, dv_* \, dv \\
&\quad - \int_{\R^3} \LL^1_\delta (h) \, \text{sign} (h) \, m \, dv.
\end{aligned}
$$
We hence deduce that 
$$
\begin{aligned}
&\quad \int_{\R^3} \LL^1_\delta (h) \, \text{sign} (h) \, m \, dv \\
&\leq \frac{1}{2} \int_{\R^3 \times \R^3 \times \Sp^2} B_\delta (v-v_*,\sigma) \, \mu \, \mu_* \, \left[g - g' \right] \, \text{sign}(g) \, (m' - m) \, d\sigma \, dv_* \, dv \\
&\leq \int_{\R^3 \times \R^3 \times \Sp^2} B_\delta (v-v_*,\sigma) \, \mu \, \mu_* \, |g| \, |m'-m|  \, d\sigma \, dv_* \, dv \\
&= \int_{\R^3 \times \R^3 \times \Sp^2} B_\delta (v-v_*,\sigma) \, \mu_* \, |h| \, |m'-m|  \, d\sigma \, dv_* \, dv.
\end{aligned}
$$
We now estimate the difference $|m'-m|$:
$$
|m'-m| \leq \left(\sup_{z \in \text{B}(v, |v'-v|)} \left|\nabla m\right|(z) \right) \, |v'-v|, 
$$
with
$$
 |v'-v| = |v-v_*|/2 \, \sin \left( \theta/2 \right) \leq \frac{1}{2 \sqrt{2}} \, |v-v_*| \, \sin \theta.
 $$
Then, we use the fact 
$$ 
\begin{aligned}
\sup_{z \in \text{B}(v, |v'-v|)} \left|\nabla m\right|(z)   
&\leq k \, 2^{k-1} \, \left(\langle v \rangle ^{k-2} + \langle v-v' \rangle^{k-1}\right) \\
&\leq k \, 2^{2(k-1)} \, \left(\langle v \rangle ^{k-2} + \langle v_* \rangle^{k-1}\right),
\end{aligned}
$$
which implies that 
\beqn \label{eq:gradm}
|m'-m| \leq C_k \, |v-v_*| \, \sin \theta \, \left(\langle v \rangle ^{k-1} + \langle v_* \rangle^{k-1}\right),
\eeqn
for some constant $C_k>0$ depending on $k$.
\begin{rem}
We here point out that this kind of estimate does not hold in the case of a stretched exponential weight. Indeed, taking the gradient of a stretched exponential function, there is not anymore a gain in the degree as in the case of a polynomial function.
\end{rem}
We finally obtain
\beqn \label{eq:L1delta}
\begin{aligned}
&\quad \int_{\R^3} \LL^1_\delta (h) \, \text{sign} (h) \, m \, dv\\
&\leq C_k \, \int_{\R^3 \times \R^3 \times \Sp^2} b_\delta(\cos \theta) \sin \theta \, \mu_* \, |v-v_*|^{\gamma +1} \left(\langle v \rangle ^{k-1} + \langle v_* \rangle^{k-1}\right) \, |h| \, d\sigma \, dv_* \, dv \\
&\leq C_k \, \int_0^{\pi/2} b_\delta(\cos\theta) \sin^2(\theta) \, d\theta \int_0^{2\pi} d\phi \int_{\R^3 \times \R^3} \mu_* |v-v_*|^{\gamma+1}  \left(\langle v \rangle ^{k-1} + \langle v_* \rangle^{k-1}\right) \, |h| \, dv_* \, dv \\
&\leq C_k \,  \, \kappa_\delta \, \int_{\R^3} |h| \, \langle v \rangle^\gamma \, m \, dv,
\end{aligned}
\eeqn
where we used spherical coordinates to obtain the second inequality and~(\ref{eq:kappa}) to obtain the last one.

\smallskip
We now deal with $\LL^2_\delta$. We split it into two parts:
$$
\begin{aligned}
\LL^2_\delta h &= \int_{\R^3 \times \Sp^2} \left[h'_*\,  \mu' \, - h_* \,  \mu \right] \, b_\delta(\cos \theta) |v-v_*|^\gamma  \, d\sigma \, dv_* \\
&= \int_{\R^3 \times \Sp^2} B_\delta(v-v_*,\sigma)  \, h'_*\, \left[\mu'-\mu \right] \,  d\sigma \, dv_* + \int_{\R^3 \times \Sp^2} B_\delta(v-v_*,\sigma)  \, \left[h'_*- h_* \right] \,  d\sigma \, dv_*  \, \mu \\
&=: \LL^{2,1}_\delta h + \LL^{2,2}_\delta h.
\end{aligned}
$$

Concerning $\LL^{2,2}_\delta$, we use the cancellation lemma~\cite[Lemma~1]{ADVW}. It implies that 
$$
\LL^{2,2}_\delta h = \left(S_\delta \ast h \right) \, \mu
$$
with 
$$
\begin{aligned}
S_\delta(z) &= 2 \pi \, \int_0^{\pi/2} \sin\theta \, b_\delta(\cos \theta) \left( \frac{|z|^\gamma}{\cos^{\gamma+3} (\theta/2)} - |z|^\gamma \right) \, d\theta \\
&= 2 \pi \, |z|^\gamma \, \int_0^{\pi/2} \sin \theta  \, b_\delta(\cos \theta) \, \frac{1-\cos^{\gamma+3} ( \theta/2)}{\cos^{\gamma+3} (\theta/2)} \, d\theta \\
&= 2 \pi \, |z|^\gamma \, \int_0^{\delta} \sin \theta  \, b(\cos \theta) \, \frac{1-\cos^{\gamma+3} ( \theta/2)}{\cos^{\gamma+3} (\theta/2)} \, d\theta \\
&\leq C \, |z|^\gamma \, \int_0^\delta \sin \theta  \, b(\cos \theta) \, \theta^2 \, d\theta \\
& \leq C \, \delta^{2-2s} \, |z|^\gamma,
\end{aligned}
$$
where the next-to-last inequality comes from the fact that $\frac{1-\cos^{\gamma+3} ( \theta/2)}{cos^{\gamma+3} (\theta/2)} \sim \frac{\gamma+3}{2} \, \theta^2$ as $\theta$ goes to $0$. We hence deduce that  for any $\theta \in (0,\delta)$, $\frac{1-\cos^{\gamma+3} ( \theta/2)}{cos^{\gamma+3} (\theta/2)} \leq C \, \theta^2$ for some $C>0$; 
and the last inequality comes from~(\ref{eq:angularsing}).
We deduce that 
\beqn \label{eq:L22delta}
\begin{aligned}
 \int_{\R^3} \LL^{2,2}_\delta (h) \, \text{sign} (h) \, m \, dv
 &\leq \int_{\R^3} \left|S_\delta \ast h \right| \, m \, \mu \, dv \\
 &\leq C \, \delta^{2-2s} \, \int_{\R^3} \left(| \cdot |^\gamma \ast |h|\right) \, \mu \, m \, dv \\
 &\leq C \, \delta^{2-2s} \, \int_{\R^3} \left( | \cdot |^\gamma \ast \mu \, m \right) \, |h| \, dv \\
 &\leq C \, \delta^{2-2s} \, \int_{\R^3} |h| \, \langle v \rangle^\gamma \, dv.
 \end{aligned}
\eeqn

\smallskip
We now deal with $\LL^{2,1}_\delta$. To do that, we introduce the notation $M:=\sqrt{\mu}$ and write that $\mu' - \mu = (M' - M)(M' +M)$, which implies 
$$
\begin{aligned}
\int_{\R^3} \LL^{2,1}_\delta (h) \, \text{sign} (h) \, m \, dv
&\leq \int _{\R^3 \times \R^3 \times \Sp^2} B_\delta(v-v_*,\sigma) \, |h'_*| \, |M'-M| \, (M'+M) \, m \, d\sigma \, dv_* \, dv \\
&\leq  \int _{\R^3 \times \R^3 \times \Sp^2} B_\delta(v-v_*,\sigma) \, |h'_*| \, |M'-M| \, M' \, |m'-m| \, d\sigma \, dv_* \, dv \\
&\quad +  \int _{\R^3 \times \R^3 \times \Sp^2} B_\delta(v-v_*,\sigma) \, |h'_*| \, |M'-M| \, M' \, m' \, d\sigma \, dv_* \, dv \\
&\quad + \int _{\R^3 \times \R^3 \times \Sp^2} B_\delta(v-v_*,\sigma) \, |h'_*| \, |M'-M| \, M \, m \, d\sigma \, dv_* \, dv. 
\end{aligned}
$$
We now perform the pre-post collisional change of variables, which gives us:
$$
\begin{aligned} 
\int_{\R^3} \LL^{2,1}_\delta h \, \text{sign} (h) \, m \, dv
&\leq \int _{\R^3 \times \R^3 \times \Sp^2} B_\delta(v-v_*,\sigma) \, |h_*| \, |M'-M| \, M \, |m'-m| \, d\sigma \, dv_* \, dv \\
&\quad +  \int _{\R^3 \times \R^3 \times \Sp^2} B_\delta(v-v_*,\sigma) \, |h_*| \, |M'-M| \, M \, m \, d\sigma \, dv_* \, dv \\
&\quad + \int _{\R^3 \times \R^3 \times \Sp^2} B_\delta(v-v_*,\sigma) \, |h_*| \, |M'-M| \, M' \, m' \, d\sigma \, dv_* \, dv \\
&=: I_1 + I_2 + I_3.
\end{aligned}
$$ 

For the term $I_1$, we use the fact that $M$ is bounded and the estimate~(\ref{eq:gradm}) on $|m'-m|$:
\beqn \label{eq:I1}
\begin{aligned}
I_1 &\leq C_k  \int_{\R^3 \times \R^3 \times \Sp^2} b_\delta(\cos \theta) \, \sin \theta \, |v-v_*|^{\gamma+1} \, |h_*| \, M \, \left(\langle v \rangle^{k-1} + \langle v_* \rangle^{k-1} \right) \, d\sigma \, dv_* \, dv \\
&\leq C_k \, \kappa_\delta \int_{\R^3} |h| \, \langle v \rangle^\gamma \, m \, dv.
\end{aligned}
\eeqn

The term $I_2$ is treated using that $M$ is Lipschitz continuous, we obtain:
\beqn \label{eq:I2}
I_2 \leq C \, \kappa_\delta \int_{\R^3} \, |h| \, \langle v \rangle^{\gamma +1} \, dv.
\eeqn

To treat $I_3$, we first estimate the integral
$$
\int_{\R^3 \times \Sp^2} B_\delta(v-v_*, \sigma) \, |M'-M| \, M' \, m' \, d\sigma \, dv =: J(v_*)=J.
$$
Using the fact that $M$ is Lipschitz continuous, we have
$$
J \leq C  \int_{\R^3 \times \Sp^2} b_\delta(\cos \theta) \, \sin (\theta/2) \, |v-v_*|^{\gamma+1} \, M' \, m' \, d\sigma \, dv.
$$
Then, for each $\sigma$, with $v_*$ still fixed, we perform the change of variables $v \rightarrow v'$. This change of variables is well-defined on the set $\left\{ \cos \theta > 0 \right\}$. Its Jacobian determinant is 
$$
\left| \frac{dv'}{dv} \right| = \frac{1}{8} (1 + \kappa \cdot \sigma) = \frac{(\kappa' \cdot \sigma)^2}{4},
$$
where $\kappa=(v-v_*)/|v-v_*|$ and $\kappa'=(v'-v_*)/|v'-v_*|$. We have $\kappa' \cdot \sigma = \cos(\theta/2) \geq 1/\sqrt{2}$. The inverse transformation $v' \rightarrow \psi_\sigma(v')=v$ is then defined accordingly. Using the fact that 
$$
\cos \theta = \kappa \cdot \sigma = 2 (\kappa' \cdot \sigma)^2 -1 \quad \text{and} \quad \sin (\theta/2) = \sqrt{ 1 - \cos^2(\theta/2)} = \sqrt{1 - (\kappa' \cdot \sigma)^2},
$$
we obtain
$$
\begin{aligned}
J &\leq C  \int_{\R^3 \times \Sp^2} b_\delta(2(\kappa' \cdot \sigma)^2 -1) \, \sqrt{1 - (\kappa' \cdot \sigma)^2} \, |\psi_\sigma(v') - v_*|^{\gamma +1} \, M(v') \, m(v') \, dv \, d\sigma \\
&\leq C  \int_{\kappa' \cdot \sigma \geq 1/\sqrt{2}} b_\delta(2(\kappa' \cdot \sigma)^2 -1)  \sqrt{1 - (\kappa' \cdot \sigma)^2}  |\psi_\sigma(v') - v_*|^{\gamma +1}  M(v')  m(v')  \frac{1}{(\kappa' \cdot \sigma)^2}  dv' \, d\sigma \\
&\leq C  \int_{\kappa \cdot \sigma \geq 1/\sqrt{2}} b_\delta(2(\kappa \cdot \sigma)^2 -1) \, \sqrt{1 - (\kappa \cdot \sigma)^2} \, |\psi_\sigma(v) - v_*|^{\gamma +1} \, M(v) \, m(v) \, \frac{1}{(\kappa \cdot \sigma)^2} \, dv \, d\sigma. 
\end{aligned}
$$
We now use the fact that $|\psi_\sigma(v) - v_*| = |v-v_*|/(\kappa \cdot \sigma)$. We deduce that 
$$
\begin{aligned}
J &\leq C  \int_{\kappa \cdot \sigma \geq 1/\sqrt{2}} b_\delta(2(\kappa \cdot \sigma)^2 -1) \, \sqrt{1 - (\kappa \cdot \sigma)^2} \, |v - v_*|^{\gamma +1} \, M(v) \, m(v) \, \frac{1}{(\kappa \cdot \sigma)^{\gamma +3}} \, dv \, d\sigma \\
&\leq C  \int_{\R^3 \times \Sp^2} b_\delta(2(\kappa \cdot \sigma)^2 -1) \, \sqrt{1 - (\kappa \cdot \sigma)^2} \, |v - v_*|^{\gamma +1} \, M(v) \, m(v) \, dv \, d\sigma 
\end{aligned}
$$
where we used the fact that $\kappa \cdot \sigma \geq 1/ \sqrt{2}$ to bound from above $1/(\kappa \cdot \sigma)^{\gamma +3}$. Using the equalities
$$
\cos(2\theta) = 2(\kappa \cdot \sigma)^2 -1 \quad \text{and} \quad \sin \theta =\sqrt{1 - (\kappa \cdot \sigma)^2},
$$
we obtain
$$
\begin{aligned}
J &\leq C  \int_{\R^3 \times \Sp^2} b_\delta(\cos(2\theta)) \, \sin \theta \, |v-v_*|^{\gamma +1} \, M \, m \, dv \, d\sigma \\
&\leq C  \int_{\Sp^2} b_\delta(\cos(2\theta)) \, \sin \theta \, d\sigma \, \int_{\R^3} \langle v \rangle^{\gamma +1} \, M \, m \, dv \, \langle v_* \rangle^{\gamma+1} \\
& \leq C \, \kappa_\delta \, \langle v_* \rangle^{\gamma +1},
\end{aligned}
$$

Using this last estimate, we can conclude that 
\beqn \label{eq:I3}
I_3 \leq C \, \kappa_\delta \, \int_{\R^3} |h| \, \langle v \rangle^{\gamma+1} \, dv.
\eeqn

Gathering estimates (\ref{eq:L1delta}), (\ref{eq:L22delta}), (\ref{eq:I1}), (\ref{eq:I2}) and (\ref{eq:I3}), we can conclude that (\ref{eq:Ldeltadissip}) holds.
\end{proof}

We now want to deal with the part $\BB^c_{\delta, \varepsilon}-\nu_\delta$. To do that, we shall review a classical tool in the Boltzmann theory, a version of the Povzner lemma (see \cite{Wenn2,Bob,MW,BGP}). The version stated here is a consequence of the proof of Lemma 2.2 from \cite{MW}.

\begin{lem} \label{lem:Povzner}
For any $k>2$,
$$
\begin{aligned}
\forall \, v, \, v_* \in \R^3,  &\quad \int_{\Sp^2} \left[ \langle v'_* \rangle^k + \langle v'\rangle^k - \langle v_*\rangle^k - \langle v\rangle^k \right] \, b^c_\delta(\cos \theta) \, d\sigma \\
&\leq  C_k \,  \left( \langle v_*\rangle^{k-1} \langle v\rangle + \langle v\rangle^{k-1} \langle v_* \rangle\right) - C'_k \, |v|^k
\end{aligned}
$$
for some constants $C_k$, $C'_k>0$ depending on $k$. 
\end{lem}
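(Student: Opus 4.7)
The plan is to follow the classical Povzner-type argument, relying on the conservation of energy $|v'|^2 + |v'_*|^2 = |v|^2 + |v_*|^2$ to reduce the problem to an angular integration. Using the explicit formulas for $v'$, $v'_*$ one obtains $|v'|^2 = (|v|^2+|v_*|^2)/2 + T(\sigma)$ with $T(\sigma) := ((v+v_*)\cdot\sigma)\,|v-v_*|/2$; setting $A := (\langle v\rangle^2 + \langle v_*\rangle^2)/2$, this yields $\langle v'\rangle^2 = A + T(\sigma)$ and $\langle v'_*\rangle^2 = A - T(\sigma)$. Consequently the integrand equals $f(T(\sigma)) - 2A^{k/2}$ where $f(x) := (A+x)^{k/2} + (A-x)^{k/2}$ is an even convex function on $[-A,A]$ attaining its minimum $2A^{k/2}$ at $x=0$ and its maximum $A^{k/2}$ at $x = \pm A$.

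I would then split the sphere into the region where $|T(\sigma)|$ is small compared to $A$ and its complement. On the "small" region, a second-order Taylor expansion of $f$ around $0$ bounds the contribution by a constant times $A^{k/2-2}\,T(\sigma)^2$, and since $|T(\sigma)|\leq |v+v_*||v-v_*|/2 \leq C\,\langle v\rangle\langle v_*\rangle$, this produces exactly the positive mixed term $C_k(\langle v\rangle^{k-1}\langle v_*\rangle + \langle v_*\rangle^{k-1}\langle v\rangle)$ after using $A^{k/2 - 1} \lesssim \langle v\rangle^{k-2} + \langle v_*\rangle^{k-2}$. The negative gain $-C'_k|v|^k$ comes from the complementary regime: the elementary inequality
\[
a^{k/2} + b^{k/2} - (a+b)^{k/2} \leq - c_k \min(a,b)\, \max(a,b)^{k/2-1}, \qquad k>2,
\]
applied with $a = \langle v'\rangle^2$, $b = \langle v'_*\rangle^2$ and combined with the fact that for $\sigma$ in a set of positive measure of $\Sp^2$ one has $\min(\langle v'\rangle^2, \langle v'_*\rangle^2) \geq c\,\langle v\rangle^2$, yields a strict loss of order $\langle v\rangle^k$. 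The crucial point here is that the cut-off weight $b^c_\delta$ is bounded below by a positive constant times $\mathds{1}_{\theta\in[\delta,\pi/2]}$ by (\ref{eq:angularsing}), which provides a sector of $\Sp^2$ of positive (though $\delta$-dependent) measure over which no grazing cancellation occurs.

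The main technical obstacle is the decoupling between the cut-off weight, expressed via $\cos\theta = \sigma \cdot (v-v_*)/|v-v_*|$, and the gain function $T(\sigma)$, which involves $\sigma \cdot (v+v_*)$. I would handle this by working in a spherical coordinate frame adapted to $\kappa = (v-v_*)/|v-v_*|$, decomposing $\sigma$ into its $\kappa$-component (giving $\cos\theta$) and its orthogonal component (in the plane perpendicular to $\kappa$), and expressing $\sigma\cdot(v+v_*)$ through the angle between the projections of $v+v_*$ and of $\sigma$ onto that plane. A careful bookkeeping of the constants through this parametrization then produces the inequality, exactly as in the proof of Lemma~2.2 in \cite{MW}; the only modification is that the integration in $\theta$ is now restricted to $[\delta,\pi/2]$ via $b^c_\delta$, which only affects the value of the constants $C_k$ and $C'_k$ and not the structure of the estimate.
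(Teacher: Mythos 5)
Your reduction $\langle v'\rangle^2 = A + T(\sigma)$, $\langle v'_*\rangle^2 = A - T(\sigma)$, with $A=(\langle v\rangle^2+\langle v_*\rangle^2)/2$ and $T(\sigma)=\tfrac{|v-v_*|}{2}(v+v_*)\cdot\sigma$, is correct, but the two steps you build on it contain genuine errors. First, the bound $|T(\sigma)|\le C\,\langle v\rangle\langle v_*\rangle$ is false: for $v_*=0$ one has $\sup_\sigma |T(\sigma)|=|v|^2/2$, of order $\langle v\rangle^2$ and not $\langle v\rangle\langle v_*\rangle$. Hence the Taylor step on the small-$|T|$ region does not produce the mixed term $\langle v\rangle^{k-1}\langle v_*\rangle+\langle v_*\rangle^{k-1}\langle v\rangle$; without further structure it only produces a term of order $\langle v\rangle^{k}$. (Also, the integrand is not \emph{equal} to $f(T)-2A^{k/2}$ but only bounded above by it, and since $f(T)-2A^{k/2}\ge 0$ this quantity can never carry the negative part; moreover $f(\pm A)=(2A)^{k/2}$, not $A^{k/2}$.) Second, the roles of your two regions are swapped: $\min(\langle v'\rangle^2,\langle v'_*\rangle^2)=A-|T(\sigma)|$, so the set where it is $\ge c\,\langle v\rangle^2$ --- the only place where your superadditivity inequality yields a loss of order $\langle v\rangle^k$ --- is precisely where $|T|$ is small compared with $A$, i.e. your first region, not the ``complementary regime''. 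The workable organization of your ingredients is the pointwise bound $\langle v'\rangle^k+\langle v'_*\rangle^k\le(\langle v\rangle^2+\langle v_*\rangle^2)^{k/2}-c_k\min(\langle v'\rangle^2,\langle v'_*\rangle^2)\,\max(\langle v'\rangle^2,\langle v'_*\rangle^2)^{k/2-1}$, with the gain obtained by restricting the $\sigma$-integration to $\{|T(\sigma)|\le A/2\}\cap\{\theta\in[\delta,\pi/2]\}$ (whose measure is indeed bounded below uniformly in $v,v_*$, by a symmetry argument you would still need to supply).

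Even so repaired, there remains a substantive gap on the positive part. Unless one uses that the first order in $\theta$ contribution of $T(\sigma)$ cancels upon azimuthal averaging (in the frame adapted to $\kappa$, $T(\sigma)=\tfrac12(|v|^2-|v_*|^2)\cos\theta+\tfrac{|v-v_*|}{2}\sin\theta\,(v+v_*)\cdot\omega$ with the second term of zero azimuthal mean), your estimates bound the positive part only by a constant of size $K_\delta=\int_{\Sp^2}b^c_\delta(\cos\theta)\,d\sigma\approx\delta^{-2s}$ times the mixed term, i.e. a constant blowing up as $\delta\to0$, whereas the lemma asserts $C_k$ depending only on $k$. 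This uniformity is exactly what the paper gets by adapting Lemma~2.2 of \cite{MW} with $\psi=\langle\cdot\rangle$: there the mixed term carries the weight $\int_0^{\pi/2}b^c_\delta(\cos\theta)\sin^2\theta\,d\theta$, which is bounded uniformly in $\delta$ by (\ref{eq:angularsing}), while the negative constant $C'_{k,\delta}$ stays bounded below. Your final paragraph defers to ``exactly as in the proof of Lemma~2.2 in \cite{MW}'' --- which is in fact the paper's entire proof --- but the self-contained material you add is precisely the part that is flawed: as written, the proposal neither yields the $\delta$-uniform constant $C_k$ nor correctly locates where the $-C'_k|v|^k$ gain comes from.
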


\begin{proof}
If we adapt the proof of Lemma 2.2 from~\cite{MW} taking $\psi =\langle \cdot \rangle$, we obtain
$$
\begin{aligned}
&\quad \int_{\Sp^2} \left[  \langle v'_* \rangle^k + \langle v'\rangle^k - \langle v_*\rangle^k - \langle v\rangle^k \right] \, b^c_\delta(\cos \theta) \, d\sigma \\
&\leq C_k \,\left( \int_0^{\pi/2} b_\delta^c(\cos \theta) \sin^2(\theta) \, d\theta \right) \, 
 \left( \langle v_*\rangle^{k-1} \langle v\rangle + \langle v\rangle^{k-1} \langle v_* \rangle \right) -  C'_{k,\delta} \, |v|^k 
\end{aligned}
$$
with $C'_{k,\delta} \xrightarrow[\delta \rightarrow 0]{} +\infty$ and $C'_{k,\delta} \geq C'_k>0$ for any $\delta \in (0,1)$. 
We then conclude using~(\ref{eq:angularsing}) which implies that 
$$
\int_0^{\pi/2} b^c_\delta(\cos \theta) \sin^2(\theta) \, d\theta \approx \left(\frac{\pi}{2}\right)^{1-2s} - \delta^{1-2s} \leq C
$$
for any $\delta \in (0,1)$. 
\end{proof}

We can now prove the following estimate on $\BB^c_{\delta, \varepsilon}-\nu_\delta$.
\begin{lem} \label{lem:B1dissip}
For any $k>2$, for any $\varepsilon \in (0,1)$ and for $\delta \in (0,1)$ small enough, we have the following estimate
\beqn \label{eq:B1}
\forall \, h \in L^1(\langle v \rangle^\gamma m), \quad \int_{\R^3} \BB^c_{\delta, \varepsilon}  (h) \, \mathrm{sign} (h) \, m \, dv - \int_{\R^3} \nu_\delta \, |h| \, m \, dv
\leq \left(\Lambda_{k,\delta}(\varepsilon) - \lambda_k \right) \|h\|_{L^1(\langle v \rangle^\gamma m)} 
\eeqn
where $\lambda_k>0$ is a constant depending on $k$ and $\Lambda_{k,\delta}(\varepsilon)$ is a constant depending on $k$ and $\delta$ which tends to $0$ as $\varepsilon$ goes to $0$ when $k$ and $\delta$ are fixed. 
\end{lem}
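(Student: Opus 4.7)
The plan is to start from the pointwise $L^1$ sign bound: using $h\,\mathrm{sign}(h)=|h|$ together with $-\mu h_*\,\mathrm{sign}(h)\le \mu|h_*|$, one obtains
\begin{equation*}
\BB^c_{\delta,\varepsilon}(h)\,\mathrm{sign}(h) - \nu_\delta\,|h| \,\leq\, \int_{\R^3 \times \Sp^2}(1-\Theta_\varepsilon)\bigl(\mu'_*\,|h'| + \mu'\,|h'_*| + \mu\,|h_*|\bigr)\,b^c_\delta(\cos\theta)\,|v-v_*|^\gamma\,d\sigma\,dv_* - \nu_\delta\,|h|,
\end{equation*}
then multiplies by $m$ and integrates in $v$. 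The right-hand side splits naturally into a ``small'' truncation piece $I_{\mathrm{trunc}}$, carrying the factor $(1-\Theta_\varepsilon)$, and a genuinely dissipative term $-\int\nu_\delta\,|h|\,m\,dv$.

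For the dissipative term, I will rely on the identity $\nu_\delta = K_\delta\,(\mu\ast|\cdot|^\gamma)$ together with $\mu\ast|\cdot|^\gamma\approx\langle v\rangle^\gamma$ and $K_\delta\approx\delta^{-2s}\to+\infty$ as $\delta\to 0$, both recorded just before Subsection~\ref{subsubsec:dissip}. Hence for any prescribed constant $\lambda_k>0$ depending on $k$, by choosing $\delta$ small enough one may ensure
\begin{equation*}
\int_{\R^3}\nu_\delta\,|h|\,m\,dv\,\geq\,\lambda_k\,\|h\|_{L^1(\langle v\rangle^\gamma m)}.
\end{equation*}

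For the truncation piece the key observation is that $1-\Theta_\varepsilon$ is supported in the bad set
\begin{equation*}
\{|v|>\varepsilon^{-1}\}\cup\{|v-v_*|>\varepsilon^{-1}\}\cup\{|v-v_*|<2\varepsilon\}\cup\{|\cos\theta|>1-2\varepsilon\}.
\end{equation*}
For the loss contribution $\int(1-\Theta_\varepsilon)\,\mu\,|h_*|\,m\,b^c_\delta\,|v-v_*|^\gamma\,d\sigma\,dv_*\,dv$, I would decompose along these four sub-regions and bound each of them using, in turn: the Gaussian tail $\mu(v)m(v)\lesssim e^{-|v|^2/4}$ on $\{|v|>\varepsilon^{-1}\}$; the polynomial tail $m(v_*)^{-1}\lesssim\varepsilon^k$ on $\{|v_*|>\varepsilon^{-1}/2\}$ (which contains, together with the previous region, the whole of $\{|v-v_*|>\varepsilon^{-1}\}$); the small factor $|v-v_*|^\gamma\lesssim\varepsilon^\gamma$ on $\{|v-v_*|<2\varepsilon\}$; and the spherical measure bound $\int_{|\cos\theta|>1-2\varepsilon}b^c_\delta\,d\sigma\lesssim_\delta\varepsilon$. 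The two gain contributions $\mu'_*|h'|$ and $\mu'|h'_*|$ are handled by the pre-post collisional change of variables $(v,v_*,\sigma)\leftrightarrow(v',v'_*,\sigma')$ already used in the proof of Lemma~\ref{lem:Ldeltadissip}, which recasts them as integrals of $\mu_*|h|$ and $\mu|h_*|$ against the weight $m(v')\leq C\,m(v)\,\langle v_*\rangle^k$ (using $\langle v'\rangle\le\sqrt 2\,\langle v\rangle\langle v_*\rangle$); the extra polynomial in $v_*$ is then absorbed by the Gaussian factor $\mu_*$ (resp.\ $\mu$).

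The main obstacle will be the bookkeeping after the pre-post change, since $(1-\Theta_\varepsilon)$ then becomes a function of the primed variables; in particular the sub-region $\{|v'|>\varepsilon^{-1}\}$ must be treated by splitting via $|v'|\le|v|+|v_*|$ into $\{|v|>\varepsilon^{-1}/2\}\cup\{|v_*|>\varepsilon^{-1}/2\}$ and reusing the Gaussian/polynomial decay estimates from the loss bound; the other two primed sub-regions coincide with their unprimed counterparts thanks to $|v'-v'_*|=|v-v_*|$ and the standard angular relabelling that leaves $b^c_\delta$ invariant. Collecting the resulting twelve contributions into a single constant $\Lambda_{k,\delta}(\varepsilon)$, which by construction tends to $0$ as $\varepsilon\to 0$ with $k$ and $\delta$ held fixed, and combining with the dissipation estimate above, yields~(\ref{eq:B1}).
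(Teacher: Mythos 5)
Your treatment of the loss term, of the regions $\{|v-v_*|<2\varepsilon\}$ and $\{|\cos\theta|>1-2\varepsilon\}$, and of the dissipative term $-\int\nu_\delta|h|m$ is fine and matches the first steps of the paper's proof. The genuine gap is your claim that the gain contributions carried by the large-velocity part of the support of $1-\Theta_\varepsilon$ can be collected into a constant $\Lambda_{k,\delta}(\varepsilon)$ tending to $0$ as $\varepsilon\to0$. After the pre-post change of variables the term $\mu'_*|h'|\,m$ becomes $\mu_*|h|\,m(v')$ with $m(v')\le C\,m(v)\langle v_*\rangle^{k}$, and on the sub-region $\{|v|>\varepsilon^{-1}/2\}$ there is no smallness available \emph{uniformly in} $h$: the integral of $\mu_*\langle v_*\rangle^{k+\gamma}|h(v)|\langle v\rangle^{k+\gamma}$ over that set is only bounded by $C_{k,\delta}\|h\|_{L^1(\langle v\rangle^\gamma m)}$ with $C_{k,\delta}$ independent of $\varepsilon$ (test with normalized bumps concentrated at $|v|=n\to\infty$), while bounding the indicator by $2\varepsilon|v|$ would require the moment $\|h\|_{L^1(\langle v\rangle^{\gamma+1}m)}$, which the space does not provide. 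Nor can you beat this term by making $\lambda_k$ large through $\delta\to0$: the offending gain bound scales like $K_\delta$, exactly as the negative term $-\int\nu_\delta|h|m$ does, and with a worse constant, so brute force cannot close the estimate.

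The missing idea is the one the paper uses: on the large-velocity region one must \emph{not} seek smallness but cancellation between gain and loss. Writing the $\chi_{\varepsilon^{-1}}$-part of the integral as $\mu_*|h|\,|v-v_*|^\gamma$ against $\int_{\Sp^2}\bigl(\langle v'\rangle^k+\langle v'_*\rangle^k-\langle v\rangle^k-\langle v_*\rangle^k\bigr)b^c_\delta\,d\sigma$ (legitimate because $\chi_{\varepsilon^{-1}}$, chosen as a function of energy and relative velocity, is invariant under the collisional change of variables), the Povzner estimate of Lemma~\ref{lem:Povzner} replaces the top-order weights by lower-order cross terms $\langle v\rangle^{k-1}\langle v_*\rangle+\langle v\rangle\langle v_*\rangle^{k-1}$ \emph{plus a negative term} $-C'_k|v|^k$. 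Only the cross terms gain the factor $\varepsilon$ via $\chi_{\varepsilon^{-1}}\le2\varepsilon(|v|+|v_*|)$ and go into $\Lambda_{k,\delta}(\varepsilon)$; the dissipation on $\{\chi_{\varepsilon^{-1}}=1\}$ comes from the Povzner negative term, and $-\nu_\delta$ furnishes it on the complementary bounded region, the condition ``$\delta$ small enough'' being exactly what guarantees $K_\delta\ge C'_k\,2^{1-k/2}$ so that the two negative contributions patch together into $-\lambda_k\|h\|_{L^1(\langle v\rangle^\gamma m)}$. Without this step your decomposition cannot yield the stated form of the estimate.
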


\begin{proof}
We compute
$$
\|\BB^c_{\delta, \varepsilon} h \|_{L^1(m)} \leq \int_{\R^3 \times \R^3 \times \Sp^2} (1-\Theta_\varepsilon) \, B^c_\delta(v-v_*,\sigma) \left[ \mu'_* |h'| + \mu' |h'_*|+ \mu |h_*| \right] \, m \, d\sigma \, dv_* \, dv.
$$
We first bound from above the truncation function $(1-\Theta_\varepsilon)$:
$$
\begin{aligned}
\|\BB^c_{\delta, \varepsilon} h \|_{L^1(m)} 
&\leq \int_{\left\{|\cos \theta| \in [1-\varepsilon,1]\right\}} B^c_\delta(v-v_*,\sigma)\,  \mu_* |h| \, (m'+m'_*+m_*) \, d\sigma \, dv_* \, dv \\
&\quad +  \int_{\left\{|v-v_*| \leq \varepsilon\right\}} B^c_\delta(v-v_*,\sigma)\,  \mu_* |h| \, (m'+m'_*+m_*) \, d\sigma \, dv_* \, dv \\
&\quad + \int_{\left\{|v| \geq \varepsilon^{-1} \, \text{or} \,  |v-v_*| \geq \varepsilon^{-1}\right\}} B^c_\delta(v-v_*,\sigma) \left[ \mu'_* |h'| + \mu' |h'_*|+ \mu |h_*| \right] \, m \, d\sigma \, dv_* \, dv,
\end{aligned}
$$
where the pre-post collisional change of variables has been used in the two first terms. We obtain that $\|\BB^c_{\delta, \varepsilon} h \|_{L^1(m)}$ is bounded from above by
\beqn \label{eq:B11}
\begin{aligned}
& C_k \, \left( \int_{\left\{|\cos \theta| \in [1-\varepsilon,1] \right\}} \mathds{1}_{\theta \geq \delta} \, b(\cos \theta) \, d\sigma + K_\delta\,\varepsilon^\gamma \right) \int_{\R^3 \times \R^3} \mu_* \langle v_* \rangle^{\gamma +k} \, |h| \, \langle v \rangle^{\gamma+k} \, dv_* \, dv \\
&\quad + \int_{\R^3 \times \R^3 \times \Sp^2} \chi_{\varepsilon^{-1}} B^c_\delta(v-v_*,\sigma) \left[ \mu'_* |h'| + \mu' |h'_*|+ \mu |h_*| \right] \, m \, d\sigma \, dv_* \, dv \\
&=: J_1 + J_2
\end{aligned}
\eeqn
where $\chi_{\varepsilon^{-1}}$ is the characteristic function of the set
$$
\left\{ \sqrt{|v|^2 + |v_*|^2} \geq \varepsilon^{-1} \, \text{or}\, |v-v_*| \geq \varepsilon^{-1} \right\}.
$$

The first term of the right hand side of~(\ref{eq:B11}) is easily controlled as 
\beqn \label{eq:firstterms}
J_1 \leq C_k \,C_\delta \, \varepsilon^{\gamma} \, \|h\|_{L^1(\langle v \rangle^\gamma m)}.
\eeqn

As far as the second term in~(\ref{eq:B11}) is concerned, we write
$$
\begin{aligned}
J_2 &= \int_{\R^3 \times \R^3 \times \Sp^2} \chi_{\varepsilon^{-1}} B^c_\delta(v-v_*,\sigma) \left[ \mu'_* |h'| + \mu' |h'_*|+ \mu |h_*| \right] \, m \, d\sigma \, dv_* \, dv \\
&= \int_{\R^3 \times \R^3 \times \Sp^2} \chi_{\varepsilon^{-1}} B^c_\delta(v-v_*,\sigma) \left[ \mu'_* |h'| + \mu' |h'_*|- \mu_* |h| - \mu |h_*| \right] \, m \, d\sigma \, dv_* \, dv \\
&\quad + K_\delta \int_{\R^3 \times \R^3}  \chi_{\varepsilon^{-1}} \, \mu_* |h| \, |v-v_*|^\gamma \, m \, dv_* \, dv \\
&\quad + 2 \, K_\delta \int_{\R^3 \times \R^3}  \chi_{\varepsilon^{-1}} \, \mu |h_*| \, |v-v_*|^\gamma \, m \, dv_* \, dv \\
&=: T_1 + T_2 + T_3.
\end{aligned}
$$
We notice that the characteristic function $\chi_{\varepsilon^{-1}}$ is invariant under the usual pre-post collisional change of variables as it only depends on the kinetic energy and momentum. We hence bound the term $T_1$ thanks to Lemma~\ref{lem:Povzner}:
$$
\begin{aligned} 
T_1 &\leq \int_{\R^3 \times \R^3} \chi_{\varepsilon^{-1}} \, \mu_* |h| \, |v-v_*|^\gamma \, \int_{\Sp^2} \left(\langle v'_* \rangle ^k + \langle v' \rangle^k - \langle v_*\rangle^k - \langle v \rangle^k \right) b^c_\delta(\cos \theta) \, d\sigma \, dv_* \, dv \\
&\leq  C_k \, \int_{\R^3 \times \R^3}  \chi_{\varepsilon^{-1}} \, \mu_* |h| \, |v-v_*|^\gamma \, \left( \langle v \rangle^{k-1} \langle v_*\rangle +  \langle v\rangle \langle v_*\rangle^{k-1} \right) \, dv_* \, dv \\
&\quad - C'_k \, \int_{\R^3 \times \R^3} \chi_{\varepsilon^{-1}} \, \mu_* |h| \, |v-v_*|^\gamma \,  | v |^k \, dv_* \, dv \\
&\leq C_k \, \int_{\R^3 \times \R^3}  \chi_{\varepsilon^{-1}} \, \mu_* |h| \, |v-v_*|^\gamma \, \left(\langle v \rangle^{k-1}  \langle v_*\rangle +  \langle v\rangle \langle v_* \rangle^{k-1} \right) \, dv_* \, dv \\
&\quad +C'_k \, \int_{\R^3 \times \R^3} \chi_{\varepsilon^{-1}} \, \mu_* |h| \, |v-v_*|^\gamma \, dv_* \, dv \\
&\quad - C'_k \, 2^{1-k/2} \,\int_{\R^3 \times \R^3} \chi_{\varepsilon^{-1}} \, \mu_* |h| \, |v-v_*|^\gamma \, \langle v \rangle^k \, dv_* \, dv \\
&=: T_{11} + T_{12} + T_{13}. 
\end{aligned}
$$
We treat together the terms $T_{11}$, $T_{12}$ and $T_3$ using the following inequality:
$$
\chi_{\varepsilon^{-1}} (v,v_*) \leq \mathds{1}_{\left\{|v| \geq \varepsilon^{-1}/2\right\}} + \mathds{1}_{\left\{|v_*| \geq \varepsilon^{-1}/2\right\}} \leq 2 \, \varepsilon (|v| + |v_*|).
$$
We obtain:
\beqn \label{eq:T11T12T3}
\begin{aligned}
&\quad T_{11}+T_{12}+T_3 \\
&\leq \varepsilon \, C_k \, \int_{\R^3 \times \R^3} (|v|+|v_*|) \, \mu_* |h| \, |v-v_*|^\gamma \, \left(\langle v \rangle^{k-1} \langle v_* \rangle + \langle v \rangle \langle v_*\rangle^{k-1} \right) \, dv_* \, dv \\
&\quad +\varepsilon \, C'_k \, \int_{\R^3 \times \R^3} (|v|+|v_*|) \, \mu_* |h| \, |v-v_*|^\gamma \, dv_* \, dv \\
&\quad + \varepsilon \, K_\delta \int_{\R^3 \times \R^3}  (|v|+|v_*|) \, \mu |h_*| \, |v-v_*|^\gamma \, m \, dv_* \, dv \\
&\leq \varepsilon \,  C_k \,  C_\delta \, \|h\|_{L^1(\langle v \rangle^\gamma m)}.
\end{aligned}
\eeqn

Gathering~(\ref{eq:firstterms}) and~(\ref{eq:T11T12T3}), we conclude that 
\beqn \label{eq:positiveterm}
J_1 + T_{11} + T_{12} + T_3 \leq C_k \, C_\delta\, (\varepsilon + \varepsilon^\gamma) \, \|h\|_{L^1(\langle v \rangle^\gamma m)} 
=: \Lambda_{k,\delta} (\varepsilon) \, \|h\|_{L^1(\langle v \rangle^\gamma m)}.
\eeqn

We now put together the terms $T_{13}$, $T_2$ and the term coming from $\nu_\delta$, their sum is bounded from above by
$$
-K_\delta \int_{ \R^3\times \R^3} (1-\chi_{\varepsilon^{-1}}) \, \mu_* \, |v-v_*|^\gamma  |h| \, m \, dv_*  dv \, - \, C'_k \, 2^{1-k/2} \,\int_{ \R^3 \times \R^3} \chi_{\varepsilon^{-1}} \, \mu_* \,  |v-v_*|^\gamma  |h| \, m \, dv_* dv.
$$
Since $K_\delta \rightarrow \infty$ as $\delta \rightarrow 0$, we can take $\delta$ small enough so that $K_\delta \geq C'_k \, 2^{1-k/2}$, we obtain the following bound:
\beqn \label{eq:negativeterm}
-C'_k \, 2^{1-k/2} \int_{\R^3} (\mu \ast |\cdot|^\gamma) \, |h| \, m \, dv \leq - \lambda_k \, \|h\|_{L^1(\langle v \rangle^\gamma m)} .
\eeqn

Combining the bounds obtained in~(\ref{eq:positiveterm}) and (\ref{eq:negativeterm}), we can conclude that (\ref{eq:B1}) holds, which concludes the proof.
\end{proof}

We can now prove the dissipativity properties of $\BB_{\delta, \varepsilon} = \LL_\delta + \BB^c_{\delta, \varepsilon} - \nu_\delta$. 
\begin{lem} \label{lem:Bdissip}
Let us consider $a\in(-\lambda_k,0)$ where $\lambda_k$ is defined in~Lemma \ref{lem:B1dissip}. For $\delta>0$ and $\varepsilon>0$ small enough, $\BB_{\delta, \varepsilon} - a$ is dissipative in $L^1(m)$, namely
$$
\forall \, t \geq0, \quad \|S_{\BB_{\delta,\varepsilon}}(t)\|_{L^1(m) \rightarrow L^1(m)} \leq e^{at}.
$$
\end{lem}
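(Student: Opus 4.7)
The plan is to combine the two previous dissipativity lemmas and exploit the freedom to choose $\delta$ and $\varepsilon$ small. Adding the estimates from Lemma~\ref{lem:Ldeltadissip} and Lemma~\ref{lem:B1dissip}, and recalling that $\BB_{\delta,\varepsilon} = \LL_\delta + \BB^c_{\delta,\varepsilon} - \nu_\delta$, I would first establish
$$
\int_{\R^3} \BB_{\delta,\varepsilon}(h) \, \mathrm{sign}(h) \, m \, dv \leq \left[\varphi_k(\delta) + \Lambda_{k,\delta}(\varepsilon) - \lambda_k\right] \|h\|_{L^1(\langle v \rangle^\gamma m)}.
$$
This is not yet in the required form, since the target norm is $L^1(m)$ rather than $L^1(\langle v \rangle^\gamma m)$. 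The key observation is that once the bracketed coefficient is negative, the bound only weakens when we replace $\|h\|_{L^1(\langle v \rangle^\gamma m)}$ by $\|h\|_{L^1(m)}$, because $\langle v \rangle^\gamma \geq 1$ and multiplying a larger quantity by a negative number yields a smaller number.

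Thus, for any fixed $a \in (-\lambda_k, 0)$, I would choose parameters as follows. First fix $\delta > 0$ small enough that $\varphi_k(\delta) \leq (a+\lambda_k)/2$ and also small enough for Lemma~\ref{lem:B1dissip} to apply (the latter amounts to $K_\delta \geq C'_k \, 2^{1-k/2}$); this is possible since $\varphi_k(\delta) \to 0$ and $K_\delta \to \infty$ as $\delta \to 0$. Then, with $\delta$ frozen, pick $\varepsilon > 0$ small enough that $\Lambda_{k,\delta}(\varepsilon) \leq (a+\lambda_k)/2$, which is possible since $\Lambda_{k,\delta}(\varepsilon) \to 0$ as $\varepsilon \to 0$ with $k$ and $\delta$ fixed. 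These choices give
$$
\int_{\R^3} \BB_{\delta,\varepsilon}(h) \, \mathrm{sign}(h) \, m \, dv \leq a \, \|h\|_{L^1(\langle v \rangle^\gamma m)} \leq a \, \|h\|_{L^1(m)},
$$
where the final inequality uses $a<0$ together with $\langle v \rangle^\gamma \geq 1$.

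To pass from this generator-level estimate to the advertised semigroup bound, I would apply it along a trajectory $f(t) = S_{\BB_{\delta,\varepsilon}}(t) h$, using the standard identity
$$
\frac{d}{dt} \|f(t)\|_{L^1(m)} = \int_{\R^3} \partial_t f(t,v) \, \mathrm{sign}(f(t,v)) \, m(v) \, dv,
$$
which, combined with the a priori estimate above, yields $\frac{d}{dt} \|f(t)\|_{L^1(m)} \leq a \|f(t)\|_{L^1(m)}$; Gr\"onwall's lemma then delivers the claim. The only subtle point is justifying this differentiation for merely $L^1$ trajectories; this is the routine accretive-operator argument (approximate $\mathrm{sign}$ by smooth odd increasing functions, apply the estimate on sufficiently regular initial data, and conclude by density in $L^1(m)$), and is the only step that requires any real care. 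Otherwise, no step is intrinsically hard: the whole content is to order the two-parameter limit correctly, sending $\delta \to 0$ before $\varepsilon \to 0$ so that the $\delta$-dependent constant $C_\delta$ hidden in $\Lambda_{k,\delta}$ does not spoil the smallness.
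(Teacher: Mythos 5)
Your proof is correct and follows essentially the same route as the paper: it adds the estimates of Lemma~\ref{lem:Ldeltadissip} and Lemma~\ref{lem:B1dissip}, fixes $\delta$ then $\varepsilon$ so that $\varphi_k(\delta)+\Lambda_{k,\delta}(\varepsilon)\leq a+\lambda_k$, and uses $a<0$ together with $\langle v\rangle^\gamma\geq 1$ to pass to the $L^1(m)$ norm. The only difference is that you spell out the standard generator-to-semigroup (Gr\"onwall/accretivity) step, which the paper leaves implicit.
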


\begin{proof}
Gathering results coming from lemmas~\ref{lem:Ldeltadissip} and~\ref{lem:B1dissip}, we obtain
$$
\int_{\R^3} \BB_{\delta, \epsilon} (h) \, \text{sign} (h) \, m \, dv \leq \int_{\R^3} \left( \varphi_k(\delta) + \Lambda_{k,\delta}(\varepsilon) -\lambda_k\right) |h| \, \langle v \rangle^\gamma \, m \, dv
$$
We first take $\delta$ small enough so that $\varphi_k(\delta) \leq (a+\lambda_k)/2$. We then chose $\varepsilon$ small enough so that $\Lambda_{k,\delta}(\varepsilon) \leq (a+\lambda_k)/2$. With this choice of $\delta$ and $\varepsilon$, we have the following inequality:
$$
\varphi_k(\delta) + \Lambda_{k,\delta}(\varepsilon) - \lambda_k \leq a.
$$
It implies that
$$
\int_{\R^3} \BB_{\delta, \epsilon} (h) \, \text{sign} (h) \, m \, dv \leq a \|h\|_{L^1(\langle v \rangle^\gamma m)},
$$
which concludes the proof. 
\end{proof}

\smallskip
\subsubsection{Regularization properties} \label{subsubsec:reg}
We first state a regularity estimate on the truncated operator $\AA_{\delta, \varepsilon}$ which comes from~\cite[Lemma~4.16]{GMM}. 
\begin{lem} \label{lem:Areg}
The operator $\AA_{\delta, \varepsilon}$ maps $L^1(\langle v \rangle)$ into $L^2$ functions with compact support. 
In particular, we can deduce that $\AA_{\delta, \varepsilon} \in \BBB\left(L^2\left(\mu^{-1/2}\right)\right)$ and $\AA_{\delta, \varepsilon} \in \BBB\left(L^1\left(m\right)\right)$. 
\end{lem}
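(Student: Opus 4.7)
The key observation is that the truncation $\Theta_\varepsilon$ localizes the integrand of $\AA_{\delta,\varepsilon}(h)$ to the bounded region $\{|v|\le 2\varepsilon^{-1},\, \varepsilon\le|v-v_*|\le 2\varepsilon^{-1},\, |\cos\theta|\le 1-\varepsilon\}$, while $b^c_\delta$ restricts $\theta\ge\delta$. By~(\ref{eq:angularsing}) the angular factor $\Theta_\varepsilon\,b^c_\delta(\cos\theta)$ is then pointwise bounded, and the kinetic factor $|v-v_*|^\gamma$ is bounded both above and away from zero on the support. Hence the full collision kernel is bounded by some constant $C(\delta,\varepsilon)$ with compact support in $(v,v_*,\sigma)$, and in particular the output $\AA_{\delta,\varepsilon}(h)$ is automatically supported in $\{|v|\le 2\varepsilon^{-1}\}$.

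I would first handle the loss contribution $-\int \Theta_\varepsilon\,\mu\,h_*\,b^c_\delta(\cos\theta)\,|v-v_*|^\gamma\,d\sigma\,dv_*$: after integrating in $\sigma$, it is of the form $\mu(v)\int_{\R^3} h(v_*)\,\tilde K(v,v_*)\,dv_*$ with $\tilde K$ bounded and compactly supported in $\R^3\times\R^3$, so it is pointwise dominated by $C(\delta,\varepsilon)\,\mu(v)\,\mathds{1}_{|v|\le 2\varepsilon^{-1}}\,\|h\|_{L^1}$, which is clearly in $L^2$ with compact support.

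The main work lies in the two gain contributions $\int \Theta_\varepsilon\,\mu'_*\,h'\,b^c_\delta|v-v_*|^\gamma\,d\sigma\,dv_*$ and $\int \Theta_\varepsilon\,\mu'\,h'_*\,b^c_\delta|v-v_*|^\gamma\,d\sigma\,dv_*$. I would apply a Carleman-type change of variables in the spirit of~\cite[Lemma~4.16]{GMM} and~\cite{ADVW} to rewrite each as an integral $\int_{\R^3} h(w)\,K^\delta_\varepsilon(v,w)\,dw$. The cutoff $|\cos\theta|\le 1-\varepsilon$ keeps the Jacobian of the change of variables $v\mapsto v'$ (which involves a factor of the form $1/(\kappa'\cdot\sigma)^2$ exactly as in the computation of the term $J$ in the proof of Lemma~\ref{lem:Ldeltadissip}) uniformly bounded; combined with the bounds $\varepsilon\le|v-v_*|\le 2\varepsilon^{-1}$ and the smoothness of $\mu$, this yields $\sup_{v,w}|K^\delta_\varepsilon(v,w)|\le C(\delta,\varepsilon)$ with $K^\delta_\varepsilon$ supported in a bounded subset of $\R^3\times\R^3$. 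Therefore $\|\AA_{\delta,\varepsilon}(h)\|_{L^\infty}\le C(\delta,\varepsilon)\|h\|_{L^1}$ with compact support, whence $\|\AA_{\delta,\varepsilon}(h)\|_{L^2}\le C(\delta,\varepsilon)\|h\|_{L^1(\langle v\rangle)}$.

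The two ``in particular'' statements follow by elementary embeddings: by Cauchy--Schwarz $L^2(\mu^{-1/2})\hookrightarrow L^1(\langle v\rangle)$, and compactly supported $L^2$ functions lie in $L^2(\mu^{-1/2})$ with comparable norms (as $\mu^{-1}$ is bounded on any compact set), giving $\AA_{\delta,\varepsilon}\in\mathscr{B}(L^2(\mu^{-1/2}))$; and $L^1(m)\hookrightarrow L^1(\langle v\rangle)$ since $k>2>1$, while bounded compactly supported functions lie in $L^1(m)$, giving $\AA_{\delta,\varepsilon}\in\mathscr{B}(L^1(m))$. The only non-routine step is the Carleman parametrization of the gain terms, but the truncations $\Theta_\varepsilon$ and $b^c_\delta$ have been designed precisely so that every potentially singular quantity appearing in it (the inverse Jacobian, the angular kernel, and $|v-v_*|^\gamma$) becomes uniformly bounded; the computation then reduces to bookkeeping, essentially as in the hard-sphere case of~\cite[Lemma~4.16]{GMM}.
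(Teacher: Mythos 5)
Your argument is correct, but note that the paper does not actually prove this lemma: it is imported wholesale from \cite[Lemma~4.16]{GMM} (stated there for hard spheres), so what you have written is a self-contained version of precisely the argument behind that citation — the truncations $\Theta_\varepsilon$ and $b^c_\delta$ make the collision kernel bounded and compactly supported, hence $\AA_{\delta,\varepsilon}$ maps $L^1$ into bounded functions supported in $\{|v|\le 2\varepsilon^{-1}\}$, and the two ``in particular'' statements follow from the elementary embeddings you give ($L^2(\mu^{-1/2})\hookrightarrow L^1(\langle v\rangle)$ by Cauchy--Schwarz, $L^1(m)\hookrightarrow L^1(\langle v\rangle)$ since $k>1$, and compactly supported bounded functions belong to both target spaces). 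One piece of bookkeeping in your sketch is slightly off, though the conclusion is right: since $v$ is the output variable, the relevant changes of variables for the gain terms are performed at fixed $(v,\sigma)$ in the variable $v_*$. For the term containing $h'$ one uses $v_*\mapsto v'$, whose Jacobian is $\tfrac18(1-\kappa\cdot\sigma)=\tfrac18(1-\cos\theta)$; it is here, and only here, that the truncation $|\cos\theta|\le 1-\varepsilon$ is needed, giving an inverse Jacobian bounded by $8/\varepsilon$. The factor $1/(\kappa'\cdot\sigma)^2$ you quote from the computation of $J$ in the proof of Lemma~\ref{lem:Ldeltadissip} belongs to the different change $v\mapsto v'$ at fixed $v_*$, which is bounded by $8$ without any cutoff because $\theta\in[0,\pi/2]$. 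For the term containing $h'_*$ the change $v_*\mapsto v'_*$ has Jacobian $\tfrac18(1+\cos\theta)\ge\tfrac18$ and needs no truncation at all. With that correction your proof goes through and matches the standard argument of \cite{GMM}.
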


We now study the regularization properties of $T(t): = \AA_{\delta, \varepsilon} \, S_{\BB_{\delta, \varepsilon}}(t)$. 
\begin{lem} \label{lem:Treg}
Consider $a \in (-\lambda_k,0)$. For a choice of $\delta$, $\varepsilon$ such that the conclusion of Lemma~\ref{lem:Bdissip} holds, there exists a constant $C>0$ such that 
$$
\|T(t) h\|_{L^2(\mu^{-1/2})} \leq C \, e^{at} \, \|h\|_{L^1(m)}.
$$
\end{lem}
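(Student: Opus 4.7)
The proof is essentially a composition of the two preceding lemmas, so the plan is very short and the main challenge is just to keep track of the function spaces.

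The plan is to factor $T(t) = \AA_{\delta,\varepsilon} \circ S_{\BB_{\delta,\varepsilon}}(t)$ and bound each piece separately. First, since $k > 2$, we have the continuous inclusion $L^1(m) = L^1(\langle v \rangle^k) \hookrightarrow L^1(\langle v \rangle)$, so Lemma~\ref{lem:Areg} applies with input space $L^1(m)$ and yields that $\AA_{\delta,\varepsilon}$ sends $L^1(m)$ into $L^2$ functions supported in a fixed compact set (determined by $\varepsilon$ through the cutoff $\Theta_\varepsilon$). On such a fixed compact set the weight $\mu^{-1/2}$ is bounded above, so multiplication by $\mu^{-1/2}$ is a bounded operator there, and we deduce
$$
\AA_{\delta,\varepsilon} \in \BBB\bigl(L^1(m), L^2(\mu^{-1/2})\bigr),
$$
with some constant $C = C(\delta,\varepsilon) > 0$.

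Second, for the choice of $\delta,\varepsilon$ fixed as in Lemma~\ref{lem:Bdissip} (matching the dissipativity constant $a \in (-\lambda_k,0)$), that lemma gives
$$
\|S_{\BB_{\delta,\varepsilon}}(t) h\|_{L^1(m)} \leq e^{at} \, \|h\|_{L^1(m)}, \quad t \geq 0.
$$

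Composing the two bounds, for every $h \in L^1(m)$ and every $t \geq 0$,
$$
\|T(t) h\|_{L^2(\mu^{-1/2})} = \|\AA_{\delta,\varepsilon} S_{\BB_{\delta,\varepsilon}}(t) h\|_{L^2(\mu^{-1/2})} \leq C \, \|S_{\BB_{\delta,\varepsilon}}(t) h\|_{L^1(m)} \leq C \, e^{at} \, \|h\|_{L^1(m)},
$$
which is the desired estimate. There is no genuine obstacle here; the only point that must be verified carefully is that the compact-support conclusion of Lemma~\ref{lem:Areg} genuinely upgrades the target space from $L^2$ to $L^2(\mu^{-1/2})$, which follows because $\mu^{-1/2}$ is bounded on every compact set and $\AA_{\delta,\varepsilon}(L^1(m))$ sits inside a fixed one.
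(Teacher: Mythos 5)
Your proof is correct and follows essentially the same route as the paper: both arguments use the compact-support conclusion of Lemma~\ref{lem:Areg} together with the boundedness of $\mu^{-1/2}$ on that fixed compact set to upgrade the target space to $L^2(\mu^{-1/2})$, insert the embedding $L^1(m) \hookrightarrow L^1(\langle v \rangle)$, and then conclude with the dissipativity estimate of Lemma~\ref{lem:Bdissip}. No changes are needed.
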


\begin{proof}
We here use Lemma~\ref{lem:Areg}. We introduce a constant $R>0$ such that for any $h$ in $L^1(\langle v \rangle)$, $\text{supp} \, (\AA h) \subset B(0,R)$. We then compute
$$
\begin{aligned}
\| T(t) h \|_{L^2(\mu^{-1/2})} &\leq C  \left( \int_{B(0,R)} (T(t) h)^2 \,  dv \right)^{1/2} \leq C \, \|S_{\BB_{\delta, \varepsilon}}(t) h\|_{L^1(\langle v \rangle)} \\
&\leq C \, \|S_{\BB_{\delta, \varepsilon}}(t) h\|_{L^1(m)} \leq C \, e^{at} \, \|h\|_{L^1(m)},
\end{aligned}
$$
where the last inequality comes from Lemma~\ref{lem:Bdissip}. 
\end{proof}

\medskip

\subsection{Spectral gap in $L^1(\langle v \rangle^k)$} \label{subsec:L1}
\subsubsection{The abstract theorem}
Let us now present an enlargement of the functional space of a quantitative spectral mapping theorem (in the sense of semigroup decay estimate). The aim is to enlarge the space where the decay estimate on the semigroup holds. The version stated here comes from \cite[Theorem~2.13]{GMM}.

\begin{theo}
\label{th:abstract}
Let $E$, $\EE$ be two Banach spaces such that $E \subset \EE$ with dense and continuous embedding, and consider $L \in \CCC(E)$, $\LL \in \CCC(\EE)$ with  $\mathcal{L}_{|E} = L$ and $a \in \R$. 
We assume:
\begin{enumerate}
\item[{\bf (1)}]  $L$ generates a semigroup $S_L(t)$ and
$$ 
\Sigma(L) \cap \Delta_a = \left\{\xi \right\} \subset \Sigma_d(L) 
$$
for some $\xi \in \C$ and $L-a$ is dissipative on $\mathrm{R} \left(\mathrm{Id} - \Pi_{L,\xi}\right)$.
\item[{\bf (2)}]  There exist $\AA, \, \BB \in \CCC(\EE)$ such that $\LL = \AA + \BB$ (with corresponding restrictions $A$ and $B$ on $E$) and a constant $C_a >0$ so that 
\begin{enumerate}
\item[{\bf (i)}] $\BB - a$ is dissipative on $\EE$,
\item[{\bf (ii)}] $A \in \BBB(E)$ and $\AA \in \BBB(\EE)$, 
\item[{\bf (iii)}] $T(t) := \AA S_\BB(t)$ satisfies 
$$
\forall \,t \geq 0, \quad \left\|T(t)\right\|_{\BBB(\EE,E)} \leq C_a \, e^{at}.
$$
\end{enumerate}
\end{enumerate}
Then the following estimate on the semigroup holds:
$$
\forall \, a'>a, \, \forall \,  t \geq0, \quad \left\|S_\LL(t) -  S_L(t) \Pi_{\mathcal{L}, \xi} \right\|_{\BBB(\EE)} \leq C_{a'} e^{a't} 
$$
where $C_{a'}>0$ is an explicit constant depending on the constants from the assumptions.
\end{theo}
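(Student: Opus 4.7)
The strategy is the standard factorization argument from the enlargement theory of Gualdani--Mischler--Mouhot. The starting point is the iterated Duhamel formula
$$
S_\LL(t) \;=\; \sum_{n=0}^{N} (S_\BB * (\AA S_\BB)^{*n})(t) \;+\; \bigl(S_\LL * (\AA S_\BB)^{*(N+1)}\bigr)(t),
$$
where $*$ denotes convolution in time, obtained by iterating $S_\LL = S_\BB + S_\LL *(\AA S_\BB)$. The point of choosing $N$ large (in fact, here one iteration already suffices since $T(t)=\AA S_\BB(t)$ maps $\EE$ into $E$) is that the last term, via $T^{*(N+1)}$, lands in $E$ and can therefore be estimated using the semigroup decay available on the smaller space.

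First I would check, using hypothesis (2), that each ``short" term $U_n(t) := (S_\BB * T^{*n})(t)$ obeys a bound of the form $\|U_n(t)\|_{\BBB(\EE)} \le C_n (1+t^n) e^{at}$. This is immediate by induction: $\BB-a$ is dissipative on $\EE$, $\AA\in\BBB(\EE)$, and convolution of functions behaving like $e^{at}$ (times polynomials) produces at most an extra factor $t$. Since $a'>a$, the polynomial factor is absorbed into $e^{a't}$ with a larger constant. Then I would use assumption (2)(iii) and the $E$-side dissipativity to control the remainder: writing
$$
\bigl(S_\LL * T^{*(N+1)}\bigr)(t) = \int_0^t S_\LL(t-s)\,T^{*(N+1)}(s)\,ds,
$$
for $s\in(0,t)$ the operator $T^{*(N+1)}(s)$ maps $\EE$ into $E$ with norm $\le C\,e^{as}$, and on $E$ we have $S_\LL|_E = S_L$ and can invoke hypothesis (1). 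Precisely, I would split $S_L(t-s) = [S_L(t-s) - S_L(t-s)\Pi_{L,\xi}] + S_L(t-s)\Pi_{L,\xi}$; the first bracket is bounded by $Ce^{a(t-s)}$ on $E$ by the dissipativity of $L-a$ on $\mathrm{R}(\mathrm{Id}-\Pi_{L,\xi})$, and inserting this into the time integral produces a term controlled by $e^{a't}$ (again after absorbing a polynomial factor into $a'-a>0$).

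It remains to reassemble the terms containing the spectral projection into the single contribution $S_L(t)\Pi_{\LL,\xi}$ announced in the statement. For this I would first verify that $\xi$ is a discrete eigenvalue of $\LL$ on $\EE$, with spectral projector $\Pi_{\LL,\xi}$ having finite-dimensional range equal to $\mathrm{R}(\Pi_{L,\xi})\subset E$, and $\Pi_{\LL,\xi}{}_{|E}=\Pi_{L,\xi}$. This is done by expressing the resolvent $\RR_\LL(z)$ as a Neumann-type series in $\RR_\BB(z)$ and $\AA$ (the identity $\RR_\LL(z) = \sum_{\ell=0}^{N}\RR_\BB(z)(\AA\RR_\BB(z))^\ell + \RR_\LL(z)(\AA\RR_\BB(z))^{N+1}$), and then integrating over a small circle around $\xi$; the regularizing factor $(\AA\RR_\BB(z))^{N+1}$ again lands in $E$, which allows one to identify the projector computed in $\EE$ with the one computed in $E$. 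Once this identification is in place, the contributions $S_L(\cdot)\Pi_{L,\xi}$ pulled out of each piece of the iterated Duhamel formula telescope into $S_L(t)\Pi_{\LL,\xi}$, while all remaining contributions are controlled by $C_{a'}e^{a't}$.

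The main technical obstacle, to my mind, is not the Duhamel bookkeeping itself but the spectral identification step: proving that the resolvent of $\LL$ admits a meromorphic extension to $\Delta_a$ with the \emph{same} pole structure as that of $L$, and that the corresponding spectral projectors coincide when restricted to $E$. This relies crucially on the fact that the iterated composition $(\AA\RR_\BB(z))^{N+1}$ gains regularity in the scale $\EE\hookrightarrow E$ uniformly in $z$ on the relevant contour, which in turn is the resolvent counterpart of hypothesis (2)(iii). Once this point is settled, the rest of the argument is a careful, but essentially mechanical, combination of the Duhamel iterates with the decay estimate available on $E$.
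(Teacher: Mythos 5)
This theorem is not proved in the paper: it is quoted verbatim from \cite[Theorem~2.13]{GMM}, so there is no in-paper argument to compare your attempt against. Your sketch (iterated Duhamel factorization, decay of the non-projective terms via the dissipativity of $\BB-a$, $\AA\in\BBB(\EE)$ and the regularization $T(t):\EE\to E$, then identification of the spectral projectors of $L$ and $\LL$ through the factorized resolvent) follows essentially the same strategy as the proof in that reference, and the step you single out as the main difficulty --- the spectral identification and the reassembly of the projector contributions --- is indeed where the substantive work lies there.
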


\subsubsection{Proof of Theorem~\ref{th:mainlinear}} 
The conclusion of Theorem~\ref{th:mainlinear} is a direct consequence of Theorem~\ref{th:abstract}. Indeed, denoting $E=L^2(\mu^{-1/2})$ and $\EE = L^1(m)$, assumption (1) is nothing but Proposition~\ref{prop:L2}, assumption (2)-(i) comes from Lemma~\ref{lem:Bdissip}, (2)-(ii) from Lemma~\ref{lem:Areg}  and (2)-(iii) from Lemma~\ref{lem:Treg}. We can conclude that estimate~(\ref{eq:sgdecay}) holds.

\bigskip

\section{The nonlinear equation} 
\label{sec:nonlin}
\setcounter{equation}{0}
\setcounter{theo}{0}


We first establish bilinear estimates on the collisional operator and we then prove our main result: Theorem~\ref{th:main}.

\medskip
\subsection{The bilinear estimates}
\begin{prop} \label{prop:bilinear}
Let $B$ satisfying (\ref{eq:product}), (\ref{eq:angularsing}) and (\ref{eq:Phi}). Then
$$
\|Q(h,h) \|_{L^1(m)} 
\leq C \, \Big( \|h\|_{L^1(\langle v \rangle^{\gamma } m)} \|h\|_{L^1(m)}   + \|h\|_{L^1(\langle v \rangle^{\gamma +1})} \|h\|_{W^{1,1}(\langle v \rangle^{\gamma +1} m)}   \Big)
$$
for some $C>0$. 
\end{prop}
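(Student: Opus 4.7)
\medskip
\noindent\textbf{Proof plan.} The plan is to mimic the splitting $B = B_\delta + B_\delta^c$ already used in Section~\ref{sec:lin}, writing $Q(h,h) = Q_\delta^c(h,h) + Q_\delta(h,h)$ for some fixed $\delta \in (0,1)$ small enough. The first term on the right-hand side of the estimate is meant to absorb the contribution of the cut-off kernel $b_\delta^c$ together with the part of the grazing operator $Q_\delta$ that is amenable to the cancellation lemma; the second term, involving $\|h\|_{W^{1,1}(\langle v\rangle^{\gamma+1}m)}$, is meant to absorb the remaining part of the grazing operator via a Taylor expansion, using that the angular moment $\int b_\delta(\cos\theta) \sin(\theta/2)\,d\sigma \approx \delta^{2-2s}$ is finite thanks to the moderate singularity $s\in(0,1/2)$.

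For the cut-off piece $Q_\delta^c(h,h)$, because $K_\delta = \int_{\Sp^2} b_\delta^c\,d\sigma < \infty$, we split into gain and loss, $Q_\delta^c = Q_\delta^{c,+} - Q_\delta^{c,-}$. The loss part is dominated pointwise by $C_\delta\,|h(v)|\,(|h|\ast |\cdot|^\gamma)(v) \leq C_\delta\,|h(v)|\,\langle v\rangle^\gamma \|h\|_{L^1(\langle v\rangle^\gamma)}$, so its $L^1(m)$ norm is controlled by $C_\delta \|h\|_{L^1(\langle v\rangle^\gamma m)}\|h\|_{L^1(m)}$. For the gain part, we take absolute values, perform the pre-post collisional change of variables, and invoke the elementary inequalities $\langle v'\rangle^k \leq C_k(\langle v\rangle^k + \langle v_*\rangle^k)$ and $|v-v_*|^\gamma \leq C(\langle v\rangle^\gamma + \langle v_*\rangle^\gamma)$, which again produce the first term of the right-hand side.

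For the grazing piece $Q_\delta(h,h)$, we decompose the integrand as
$$
h'_* h' - h_* h = h(h'_* - h_*) + h'_*(h' - h).
$$
On the first summand, the cancellation lemma as used in the analysis of $\LL^{2,2}_\delta$ yields
$$
\int b_\delta(\cos\theta)|v-v_*|^\gamma (h'_* - h_*)\,d\sigma\,dv_* = (S_\delta \ast h)(v),
$$
with $|S_\delta(z)| \leq C\delta^{2-2s}|z|^\gamma$, so that after multiplying by $h$ and integrating against $m$ we again obtain a $\|h\|_{L^1(\langle v\rangle^\gamma m)}\|h\|_{L^1(m)}$ contribution. On the second summand we use the Taylor formula
$$
h(v') - h(v) = (v'-v)\cdot \int_0^1 \nabla h(v_s)\,ds, \qquad v_s := v + s(v'-v),
$$
together with $|v'-v| = |v-v_*|\sin(\theta/2)$. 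The resulting angular integral $\int b_\delta(\cos\theta)\sin(\theta/2)\,d\sigma$ is of order $\delta^{2-2s}$ and therefore finite. We then distribute $|v-v_*|^{\gamma+1} \leq C(\langle v_s\rangle^{\gamma+1} + \langle v'_*\rangle^{\gamma+1})$ between the $\nabla h$ factor (which will pick up the weight $\langle v\rangle^{\gamma+1} m$) and the $h'_*$ factor (which will pick up the weight $\langle v\rangle^{\gamma+1}$), control $m(v)$ by $C(m(v_s) + m(v'_*))$ (valid on the grazing region, since $|v-v_s|\leq |v-v_*|\sin(\delta/2)$ is small), apply the pre-post collisional change of variables to remove the primes on $h_*$, and perform a change of variable $v \mapsto v_s$ (whose Jacobian is bounded and close to $1$ since $\theta \leq \delta$ is small). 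This yields the second term $C\|h\|_{L^1(\langle v\rangle^{\gamma+1})}\|h\|_{W^{1,1}(\langle v\rangle^{\gamma+1}m)}$, and the estimate follows.

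\medskip
\noindent\textbf{Main obstacle.} The technical heart is the Taylor step: one must bookkeep the weight transfer $m(v) \to m(v_s)$ and simultaneously perform a nonlinear change of variable $v \mapsto v_s$ (whose Jacobian depends on $v_*$, $\sigma$ through $|v-v_*|$), all while keeping the angular integral convergent. This is precisely what forces one to work in the grazing regime $\theta \in (0,\delta)$ with $\delta$ small, and it is also the place where the condition $s < 1/2$ is used in a crucial way.
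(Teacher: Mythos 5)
Your overall architecture is close to the paper's: the same algebraic splitting $h'_*h'-h_*h=h\,(h'_*-h_*)+h'_*\,(h'-h)$, the cancellation lemma of Alexandre--Desvillettes--Villani on the first piece, and a Taylor expansion combined with the change of variables $v\mapsto\bar v_u$ (Jacobian bounded below, $|v-v_*|\le C|\bar v_u-v_*|$) on the second. Two remarks on the set-up. First, the grazing/cut-off splitting $b=b_\delta+b^c_\delta$ is superfluous for this proposition: since $s<1/2$, both the cancellation lemma and the angular moment $\int_0^{\pi/2}b(\cos\theta)\sin^2\theta\,d\theta$ are finite for the \emph{whole} kernel, and the paper runs the argument directly with $B$; the smallness of $\theta$ is only needed by your weight-transfer step below. (Also, $\int_{\Sp^2}b_\delta(\cos\theta)\sin(\theta/2)\,d\sigma\approx\delta^{1-2s}$, not $\delta^{2-2s}$; the rate $\delta^{2-2s}$ pertains to the cancellation-lemma quantity $S_\delta$.) Your cut-off gain/loss estimates and the cancellation-lemma step are correct and produce the first term of the claimed bound.

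The genuine gap is in the Taylor step. You expand $h$ alone and then transfer the weight via $m(v)\le C\,(m(\bar v_u)+m(v'_*))$ and $|v-v_*|^{\gamma+1}\le C(\langle\bar v_u\rangle^{\gamma+1}+\langle v'_*\rangle^{\gamma+1})$. Expanding these sums inevitably produces branches where the heavy weight $m$ lands on the \emph{undifferentiated} factor $h'_*$ while $\nabla h$ carries little or no weight; after the pre-post change of variables and $v\mapsto\bar v_u$ these branches yield contributions such as $\|h\|_{L^1(\langle v\rangle^{\gamma+1}m)}\,\|\nabla h\|_{L^1}$, and this quantity is \emph{not} dominated by the right-hand side of the proposition (take $h$ to be a rapidly oscillating bump near the origin, carrying a large gradient, plus a far bump of tiny amplitude carrying large weighted mass: the product above then exceeds both terms of the claimed bound by an arbitrarily large factor). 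So, as written, your route proves a symmetric variant, roughly $\|h\|_{W^{1,1}(\langle v\rangle^{\gamma+1})}\,\|h\|_{W^{1,1}(\langle v\rangle^{\gamma+1}m)}$, but not the stated inequality. The missing device is the paper's: perform the pre-post collisional change of variables on $\int B\,|h'-h|\,|h'_*|\,m$ first, so that the weight appears as $m'$ attached to the unprimed $h_*$, then write $|h'-h|\,m'\le|h'm'-hm|+|h|\,|m'-m|$ and Taylor-expand the \emph{product} $hm$; this keeps the weight glued to the differentiated factor and gives exactly $\|h\|_{L^1(\langle v\rangle^{\gamma+1})}\|h\|_{W^{1,1}(\langle v\rangle^{\gamma+1}m)}$, while the leftover term with $|m'-m|$ needs no derivative of $h$ at all thanks to the explicit polynomial estimate \eqref{eq:gradm} and is absorbed in the first term. (For the subsequent interpolation in Corollary~\ref{cor:bilinear2} your symmetric variant would in fact suffice, but it does not establish the proposition as stated.)
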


\begin{proof}
We split $Q(h,h)$ into two parts and we use the pre-post collisional change of variables for the second one, we obtain
$$
\begin{aligned}
\|Q(h,h)\|_{L^1(m)} 
&= \int_{\R^3} \left| \int_{\R^3 \times \Sp^2} B(v-v_*, \sigma) \, \left( (h'_* -h_*)h + (h'-h)h'_* \right) \, d\sigma \, dv_* \, \right| m \, dv \\
&\leq \int_{\R^3} \left| \int_{\R^3 \times \Sp^2} B(v-v_*, \sigma) \,  (h'_* -h_*) \, d\sigma \, dv_* \right| |h| \, m \, dv \\
&\quad +  \int_{\R^3 \times \R^3 \times \Sp^2} B(v-v_*, \sigma) \, |h'-h| \, |h'_*| \, m \, d\sigma \, dv_* \, dv \\
&\leq  \int_{\R^3} \left| \int_{\R^3 \times \Sp^2} B(v-v_*, \sigma) \,  (h'_* -h_*) \, d\sigma \, dv_* \right| |h| \, m \, dv \\
&\quad +  \int_{\R^3 \times \R^3 \times \Sp^2} B(v-v_*, \sigma) \, |h'-h| \, |h_*| \, m' \, d\sigma \, dv_* \, dv \\
&=: T_1+T_2. 
\end{aligned}
$$

\smallskip
We first deal with $T_1$ using the cancellation lemma~\cite[Lemma~1]{ADVW}:
$$
T_1 = \int_{\R^3} \left| S \ast h \right| \, |h| \, m \, dv
$$
with
$$
\begin{aligned}
S(z) &= 2 \pi \, \int_0^{\pi/2} \sin\theta \, b(\cos \theta) \left( \frac{|z|^\gamma}{\cos^{\gamma+3} (\theta/2)} - |z|^\gamma \right) \, d\theta \\
&= 2 \pi \, |z|^\gamma \, \int_0^{\pi/2} \sin \theta  \, b(\cos \theta) \, \frac{1-\cos^{\gamma+3} ( \theta/2)}{\cos^{\gamma+3} (\theta/2)} \, d\theta \\
& \leq C \,  |z|^\gamma.
\end{aligned}
$$
We deduce that 
\beqn \label{eq:T1}
T_1 \leq C \, \|h\|_{L^1(\langle v \rangle^\gamma)} \|h\|_{L^1(\langle v \rangle^\gamma m)}.
\eeqn

\smallskip
We now treat the term $T_2$ which is splitted into two parts:
$$
\begin{aligned}
T_2 &= \int_{\R^3 \times \R^3 \times \Sp^2} B(v-v_*, \sigma) \, |h'm'-hm'| \, |h_*|  \, d\sigma \, dv_* \, dv \\
&\leq  \int_{\R^3 \times \R^3 \times \Sp^2} B(v-v_*, \sigma) \, |h'm'-hm| \, |h_*|  \, d\sigma \, dv_* \, dv \\
&\quad +  \int_{\R^3 \times \R^3 \times \Sp^2} B(v-v_*, \sigma) \, |m'-m| \, |h| \, |h_*|  \, d\sigma \, dv_* \, dv \\
&=: T_{21} + T_{22}.
\end{aligned}
$$

Concerning $T_{21}$, we have to estimate
$$
\int_{\R^3 \times \Sp^2} b(\cos \theta) \, |v-v_*|^\gamma \, |h'm'-hm| \, dv \, d\sigma =:J(v_*)=J.
$$
To do that, we use Taylor formula denoting $\overline{v}_u:=(1-u) v + u v'$ for any $u \in [0,1]$, which allows us to estimate $|h'm'-hm|$:
$$
\begin{aligned}
|h'm'-hm| &= \left| \int_0^1 \nabla (hm)(\overline{v}_u)  \cdot (v-v') \, du \right| \\
&\leq \int_0^1 |\nabla (hm) (\overline{v}_u)| \, |v-v_*| \, \sin (\theta/2) \, du.
\end{aligned}
$$ 
It implies the following inequality on $J$:
$$
J \leq C \int_{\R^3 \times \Sp^2 \times [0,1]} b(\cos \theta) \, \sin(\theta) \, |v-v_*|^{\gamma+1} \, \left|\nabla (hm) (\overline{v}_u)\right| \, du \, d\sigma \, dv.
$$
Moreover, if $v \neq v_*$, we have the following equality:
$$
|v-v_*|= \frac{1}{\left| \left(1-\frac{u}{2}\right) \, \kappa + \frac{u}{2} \, \sigma \right|} \, |\overline{v}_u-v_*|.
$$
Using the fact that $0 \leq \langle \kappa, \sigma \rangle \leq 1$, one can show that for any $u \in [0,1]$,   
$$
\left| \left(1-\frac{u}{2}\right) \, \kappa + \frac{u}{2} \, \sigma \right| \geq \frac{1}{\sqrt{2}}.
$$
We can thus deduce that for any $u \in [0,1]$, we have $|v-v_*| \leq C |\overline{v}_u-v_*|$ for some $C>0$, which implies 
$$
J \leq C \int_{\R^3 \times \Sp^2 \times [0,1]} b(\cos \theta) \, \sin(\theta) \, |\overline{v}_u-v_*|^{\gamma+1} \, \left|\nabla (hm) (\overline{v}_u)\right|\, du \, d\sigma \, dv.
$$
For $u$, $v_*$ and $\sigma$ fixed, we now perform the change of variables $v \rightarrow \overline{v}_u$. Its Jacobian determinant is 
$$
\left| \frac{d\overline{v}_u}{dv} \right| = \left(1- \frac{u}{2} \right)^2 \left( 1 - \frac{u}{2} + \frac{u}{2} \langle \kappa, \sigma \rangle \right) \geq \left( 1 - \frac{u}{2} \right)^3 \geq \frac{1}{8}
$$
since $\langle \kappa, \sigma \rangle \geq0$. Gathering all the previous estimates, we obtain
$$
J \leq C  \int_{\Sp^2} b(\cos \theta) \, \sin(\theta) \, d\sigma \, \int_{\R^3} |v-v_*|^{\gamma +1} \, |\nabla (hm)(v)| \, dv.
$$

We thus obtain :
\beqn \label{eq:T21}
T_{21} \leq C \, \|h\|_{L^1(\langle v \rangle^{\gamma+1})} \, \|h\|_{W^{1,1}(\langle v \rangle^{\gamma+1} m)}.
\eeqn

Let us finally deal with $T_{22}$. We here use the inequality~(\ref{eq:gradm}):
\beqn \label{eq:T22}
\begin{aligned} 
T_{22} &\leq C \, \int_{\R^3 \times \R^3 \times \Sp^2} B(v-v_*, \sigma) \, |h| \, |h_*| \, \left(\langle v \rangle^{k-1} + \langle v_* \rangle^{k-1} \right) \, |v'-v| \, d\sigma \, dv_* \, dv \\
&\leq C \, \int_{\Sp^2} b(\cos \theta) \, \sin(\theta) \, d\sigma \, \int_{\R^3 \times \R^3} |h| \, |h_*| \, \left(\langle v \rangle^{k-1} + \langle v_* \rangle^{k-1} \right) \, |v-v_*|^{\gamma+1} \, dv_* \, dv \\
&\leq C \, \|h\|_{L^1(\langle v \rangle^\gamma m)} \, \|h\|_{L^1(\langle v \rangle^{\gamma+1})}.
\end{aligned}
\eeqn

Inequalities (\ref{eq:T1}), (\ref{eq:T21}) and (\ref{eq:T22}) together yields the result. 
\end{proof}

\smallskip

We now recall a classical result from interpolation theory (see for example Lemma B.1 from~\cite{MM1}). 
\begin{lem} \label{lem:interp}
For any $s$, $s^*$, $q$, $q^* \in \Z$ with $s \geq s^*$, $q \geq q^*$ and any $\theta \in (0,1)$, there exists $C >0$ such that for any $h \in W^{s^{**},1}(\langle v \rangle^{q^{**}})$, we have
$$
\|h\|_{W^{s,1}(\langle v \rangle^q)} \leq C \, \|h\|^{1- \theta}_{W^{s^*,1}(\langle v \rangle^{q^*})} \,  \|h\|^\theta_{W^{s^{**},1}(\langle v \rangle^{q^{**}})} 
$$
with $s^{**}$, $q^{**} \in \Z$ such that $s = (1-\theta) s^* + \theta s^{**}$ and $q = (1-\theta) q^* + \theta q^{**}$.
\end{lem}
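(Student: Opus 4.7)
The plan is to deduce this from two one-parameter interpolations -- one affecting only the polynomial weight, one affecting only the Sobolev order -- and then to glue them together. Since the two parameters $s$ and $q$ depend affinely on the \emph{same} $\theta$, the cleanest route is via the real interpolation method (the $K$-functional), viewing $W^{s,1}(\langle v\rangle^q)$ as an interpolation space between the two endpoint spaces; a more pedestrian composition of the two scalar interpolations also works.

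For the pure-weight interpolation (at fixed multi-index $\alpha$ with $|\alpha|\le s^*$), one writes
$$
\langle v\rangle^q = \langle v\rangle^{(1-\theta)q^*}\,\langle v\rangle^{\theta q^{**}}
$$
and applies H\"older's inequality with conjugate exponents $1/(1-\theta)$ and $1/\theta$ to get
$$
\|\partial^\alpha h\|_{L^1(\langle v\rangle^q)}
\le \|\partial^\alpha h\|_{L^1(\langle v\rangle^{q^*})}^{1-\theta}\,
\|\partial^\alpha h\|_{L^1(\langle v\rangle^{q^{**}})}^{\theta}
\le \|h\|_{W^{s^*,1}(\langle v\rangle^{q^*})}^{1-\theta}\,
\|h\|_{W^{s^{**},1}(\langle v\rangle^{q^{**}})}^{\theta}.
$$
This handles derivatives up to order $s^*$. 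For derivatives $\partial^\alpha h$ with $s^* < |\alpha| \le s^{**}$, one first invokes the classical Gagliardo--Nirenberg inequality for $L^1$-based Sobolev norms on $\R^3$ with the \emph{same} polynomial weight (say $\langle v\rangle^{q^{**}}$ applied to the function $h$, using that $\partial^\alpha(h\langle v\rangle^{q^{**}})$ expands via Leibniz into terms controlled by $\|h\|_{W^{s^{**},1}(\langle v\rangle^{q^{**}})}$) to interpolate between orders $s^*$ and $s^{**}$, then applies the weight interpolation above. This gives the bound for each derivative and summing over $|\alpha|\le s$ yields the full Sobolev norm.

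The main obstacle is precisely the coupling: one must ensure that the \emph{same} $\theta$ controls both the weight exponent and the derivative order simultaneously, rather than obtaining two unrelated interpolation parameters. The real interpolation method resolves this in one stroke, since the reiteration theorem identifies
$$
\bigl(W^{s^*,1}(\langle v\rangle^{q^*}),\, W^{s^{**},1}(\langle v\rangle^{q^{**}})\bigr)_{\theta,1}
\;\simeq\; W^{s,1}(\langle v\rangle^q)
$$
for the prescribed affine relations on $s$ and $q$, and the norm inequality of the statement is then just the defining property of a real interpolation space. This is a standard tool and is the reason the authors state it as a routine lemma rather than reproducing its proof.
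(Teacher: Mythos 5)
The paper does not actually prove this lemma: it is quoted as a classical interpolation result with a pointer to Lemma B.1 of Mischler--Mouhot \cite{MM1}, so the relevant question is whether your sketch would stand on its own. As written it does not. Your first step (H\"older in the weight, writing $\langle v\rangle^q=\langle v\rangle^{(1-\theta)q^*}\langle v\rangle^{\theta q^{**}}$) is fine, but it only covers derivatives of order $\le s^*$, and your treatment of the orders $s^*<|\alpha|\le s$ is exactly where the content of the lemma lies. The Gagliardo--Nirenberg-plus-Leibniz route produces a low-order factor with the \emph{strong} weight, something like $\|h\|_{W^{s^*,1}(\langle v\rangle^{q^{**}})}$, which is not controlled by the allowed endpoint $\|h\|_{W^{s^*,1}(\langle v\rangle^{q^*})}$ (recall $q^{**}\ge q\ge q^*$), and no subsequent H\"older in the weight can repair this while keeping the \emph{same} exponent $\theta$ on both factors -- this is precisely the coupling obstacle you yourself identify, and it is left unresolved by the elementary part of your argument.

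The resolution you then invoke is not a standard fact in the form you state it. First, the identification $\bigl(W^{s^*,1}(\langle v\rangle^{q^*}),W^{s^{**},1}(\langle v\rangle^{q^{**}})\bigr)_{\theta,1}\simeq W^{s,1}(\langle v\rangle^q)$ is false as an equality: already in the unweighted case, real interpolation of integer-order $L^1$-Sobolev spaces with second parameter $1$ yields a Besov-type space $B^s_{1,1}$, which is strictly smaller than $W^{s,1}$; only the one-sided embedding $B^s_{1,1}\hookrightarrow W^{s,1}$ (which is, admittedly, the direction you need, combined with $\|h\|_{(X_0,X_1)_{\theta,1}}\le C\|h\|_{X_0}^{1-\theta}\|h\|_{X_1}^{\theta}$) could be used. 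Second, and more seriously, the reiteration theorem does not deliver any identification for a couple in which the \emph{weight changes between the two endpoints}; reiteration only reshuffles interpolation spaces of a fixed couple. Interpolation with simultaneous change of smoothness and weight requires genuinely weighted arguments (explicit $K$-functional computations in the spirit of Stein--Weiss work for pure $L^p$ couples with equal $p$, and identifications for weighted Sobolev/Besov couples go back to L\"ofstr\"om and Schmeisser--Triebel), and these are delicate at $p=1$. So the ``one stroke'' step is a citation to a result that is misquoted and whose claimed mechanism does not apply; to make the lemma self-contained you would need either to quote such a weighted interpolation theorem correctly (embedding form, with both weights), or to give the elementary two-parameter argument (mollify at scale $\varepsilon$ for the derivatives, truncate at radius $R$ for the weight, and couple $R$ to $\varepsilon$ through the ratio $(q^{**}-q^*)/(s^{**}-s^*)$ before optimizing), which is the kind of proof behind the cited Lemma B.1 of \cite{MM1}.
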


\smallskip
It allows us to prove the following corollary which is going to be useful in the proof of our main theorem.
\begin{cor} \label{cor:bilinear2}
Let $B$ satisfying (\ref{eq:product}), (\ref{eq:angularsing}) and (\ref{eq:Phi}). Then
$$
\begin{aligned}
\|Q(h,h) \|_{L^1(m)} &\leq C  \left( \|h\|_{L^1(m)}^{3/2} \, \|h\|_{L^1(\langle v \rangle^{2\gamma} m)}^{1/2} + \|h\|^{3/2}_{L^1(m)} \, \|h\|^{1/2}_{H^4(\langle v \rangle^{4 \gamma + k +6})}\right).
\end{aligned}
$$
\end{cor}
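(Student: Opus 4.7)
The strategy is to start from the bilinear bound of Proposition~\ref{prop:bilinear} and apply the interpolation Lemma~\ref{lem:interp} (together with a Cauchy--Schwarz transition from a weighted $L^1$ to a weighted $L^2$) so that both summands acquire the geometric--mean shape $\|h\|_{L^1(m)}^{3/2}\|\cdot\|^{1/2}$. Throughout I recall $m=\langle v\rangle^k$ with $k>2$, and since $\gamma\in(0,1)$ one has $\gamma+1<k$, which lets low weights be freely absorbed into $m$.

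For the first summand of Proposition~\ref{prop:bilinear}, I would apply Lemma~\ref{lem:interp} with $s^*=s^{**}=0$, $\theta=1/2$, $q^*=k$, $q^{**}=k+2\gamma$, so that $q=(1-\theta)q^*+\theta q^{**}=k+\gamma$; this gives
$$
\|h\|_{L^1(\langle v\rangle^{\gamma}m)} \;\leq\; C\,\|h\|_{L^1(m)}^{1/2}\,\|h\|_{L^1(\langle v\rangle^{2\gamma}m)}^{1/2}.
$$
Multiplying by $\|h\|_{L^1(m)}$ yields the first term of the corollary.

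For the second summand, I would first use $\gamma+1<k$ to absorb $\|h\|_{L^1(\langle v\rangle^{\gamma+1})}\leq C\|h\|_{L^1(m)}$. Then, applying Lemma~\ref{lem:interp} with $s=1$, $s^*=0$, $s^{**}=2$, $\theta=1/2$, $q^*=k$ and $q^{**}=k+2\gamma+2$ (so that $q=k+\gamma+1$), I get
$$
\|h\|_{W^{1,1}(\langle v\rangle^{\gamma+1}m)} \;\leq\; C\,\|h\|_{L^1(m)}^{1/2}\,\|h\|_{W^{2,1}(\langle v\rangle^{k+2\gamma+2})}^{1/2}.
$$
To convert the $W^{2,1}$ norm into a weighted $H^4$ norm, I would use Cauchy--Schwarz on each $\int|\partial^\alpha h|\langle v\rangle^{k+2\gamma+2}\,dv$ against the integrable weight $\langle v\rangle^{-4}$ on $\R^3$, obtaining $\|h\|_{W^{2,1}(\langle v\rangle^{k+2\gamma+2})}\leq C\,\|h\|_{H^2(\langle v\rangle^{k+2\gamma+4})}$, and then enlarge both the Sobolev index and the weight (using $\gamma>0$, so $k+2\gamma+4\leq k+4\gamma+6$) to dominate by $C\,\|h\|_{H^4(\langle v\rangle^{4\gamma+k+6})}$.

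Combining these two chains with the bound of Proposition~\ref{prop:bilinear} produces the stated inequality. The only real delicacy is the bookkeeping of exponents: the two choices $\theta=1/2$ are dictated by the requirement of obtaining the $3/2$--$1/2$ split, while the generous weight $4\gamma+k+6$ and the Sobolev order $4$ are precisely what is needed to accommodate the Cauchy--Schwarz margin ($+2$ in the weight) and the passage from two to four derivatives.
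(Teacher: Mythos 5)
Your argument is correct and follows essentially the same route as the paper: starting from Proposition~\ref{prop:bilinear}, interpolating with Lemma~\ref{lem:interp} (with $\theta=1/2$), and passing from weighted $L^1$ to weighted $L^2$ norms by Cauchy--Schwarz against the integrable factor $\langle v\rangle^{-4}$. The only (harmless) variation is in the second term, where you absorb $\|h\|_{L^1(\langle v\rangle^{\gamma+1})}$ into $\|h\|_{L^1(m)}$ and interpolate once to $W^{2,1}(\langle v\rangle^{2\gamma+k+2})$ before dominating by $H^4(\langle v\rangle^{4\gamma+k+6})$, whereas the paper bounds the product by $\|h\|^2_{W^{1,1}(\langle v\rangle^{\gamma+k+1})}$ and interpolates twice up to $W^{4,1}(\langle v\rangle^{4\gamma+k+4})$ before the same $L^1$-to-$L^2$ step; both yield the stated estimate (yours even shows the weaker norm $H^2(\langle v\rangle^{2\gamma+k+4})$ would suffice).
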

\begin{proof}
On the one hand, using Lemma~\ref{lem:interp}, we obtain:
$$
\|h\|_{L^1(\langle v \rangle^\gamma m)} \leq \|h\|^{1/2}_{L^1(\langle v \rangle^{2\gamma} m)} \, \|h\|_{L^1(m)}^{1/2}.
$$
On the other hand, again using twice Lemma~\ref{lem:interp}, we obtain
$$
\begin{aligned}
\|h\|_{L^1(\langle v \rangle^{\gamma+1})} \, \|h\|_{W^{1,1}(\langle v \rangle^{\gamma+1} m)} 
&\leq  \|h\|^2_{W^{1,1}(\langle v \rangle^{\gamma+k+1})} \\
&\leq C \, \|h\|_{L^1(m)} \, \|h\|_{W^{2,1}(\langle v \rangle^{2\gamma+k+2})} \\
&\leq C \, \|h\|^{3/2}_{L^1(m)} \, \|h\|^{1/2}_{W^{4,1}(\langle v \rangle^{4\gamma+k+4})}.
\end{aligned}
$$

To conclude we use that for any $q \in \N$, we can show using H\"{o}lder inequality that
$$
\|h\|_{L^1(\langle v \rangle^q)} \leq C \, \| h \|_{L^2(\langle v \rangle^{q+2})}.
$$
\end{proof}

\medskip
\subsection{Proof of Theorem \ref{th:main}} \label{subsec:main}

Let $f_0=\mu + h_0$ and consider the equation 
\beqn \label{eq:ht}
\partial_t  h_t = \LL h_t + Q(h_t,h_t), \quad h(t=0)=h_0.
\eeqn
Let us notice that for any $t\geq0$, we have $\Pi \, h_t=0$. Indeed, $f_0$ has same mass, momentum and energy as $\mu$, it implies that $\Pi \, h_0=0$ and these quantities are conserved by the equation.

\smallskip
We now state a nonlinear stability theorem which is the third key point (with Theorems~\ref{th:polydecay} and \ref{th:mainlinear}) in the proof of Theorem~\ref{th:main}.
\begin{theo} \label{th:nonlinearstability}
Consider a solution $h_t$ to~(\ref{eq:ht}) such that 
$$
\forall \, t \geq 0, \quad \|h_t\|_{H^4(\langle v \rangle^{4 \gamma + k + 6})} \leq K
$$
for some $K>0$. 
There exists $\eta>0$ such that if moreover
$$
\forall \, t \geq 0, \quad \|h_t\|_{L^1(\langle v \rangle^{2\gamma}m)} \leq \eta
$$
then there exists $C>0$ (depending on $K$ and $\eta$) such that 
$$
\forall \, t \geq0, \quad \|h_t\|_{L^1(m)} \leq C \, e^{-\lambda t} \, \|h_0\|_{L^1(m)}
$$
for any positive $\lambda < \min (\lambda_0, \lambda_k)$ (see Theorem~\ref{th:mainlinear}).
\end{theo}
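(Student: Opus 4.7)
The plan is to combine Duhamel's formula for~(\ref{eq:ht}) with the spectral gap of Theorem~\ref{th:mainlinear} and the bilinear control of Corollary~\ref{cor:bilinear2}, closing a Gronwall loop. First I would note that $\Pi h_t = 0$ for every $t \geq 0$: the normalization~(\ref{eq:initialdatum}) gives $\Pi h_0 = 0$, while the conservation of mass, momentum and energy by the collision operator translates into $\Pi Q(h,h) = 0$, a property that is preserved along the flow. Fix a target rate $\lambda < \min(\lambda_0, \lambda_k)$ and an auxiliary $\lambda' \in (\lambda, \min(\lambda_0, \lambda_k))$. Duhamel's formula reads
\[
h_t = S_\LL(t) h_0 + \int_0^t S_\LL(t-s)\, Q(h_s, h_s) \, ds,
\]
and applying Theorem~\ref{th:mainlinear} at rate $\lambda'$ to each term (both $h_0$ and $Q(h_s,h_s)$ belong to the range of $\mathrm{Id}-\Pi$) yields
\[
\|h_t\|_{L^1(m)} \leq C_{\lambda'}\, e^{-\lambda' t} \|h_0\|_{L^1(m)} + C_{\lambda'}\int_0^t e^{-\lambda'(t-s)} \|Q(h_s, h_s)\|_{L^1(m)} \, ds.
\]

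Next I would upgrade Corollary~\ref{cor:bilinear2} into a bound that is linear in $\|h_s\|_{L^1(m)}$. The corollary gives
\[
\|Q(h_s, h_s)\|_{L^1(m)} \leq C\, \|h_s\|_{L^1(m)}^{3/2} \Bigl( \|h_s\|_{L^1(\langle v \rangle^{2\gamma} m)}^{1/2} + \|h_s\|_{H^4(\langle v \rangle^{4\gamma + k + 6})}^{1/2} \Bigr).
\]
Since $\gamma > 0$ one has $\|h_s\|_{L^1(m)} \leq \|h_s\|_{L^1(\langle v \rangle^{2\gamma} m)} \leq \eta$, hence $\|h_s\|_{L^1(m)}^{1/2} \leq \eta^{1/2}$; together with the hypothesis $\|h_s\|_{H^4(\cdots)} \leq K$ this yields
\[
\|Q(h_s, h_s)\|_{L^1(m)} \leq M(\eta, K)\, \|h_s\|_{L^1(m)}, \qquad M(\eta, K) := C\, \eta^{1/2}\bigl( \eta^{1/2} + K^{1/2}\bigr),
\]
which is as small as desired once $\eta$ is small.

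Finally, inserting this bound into the previous inequality and setting $u(t) := e^{\lambda' t} \|h_t\|_{L^1(m)}$, one obtains
\[
u(t) \leq C_{\lambda'} \|h_0\|_{L^1(m)} + C_{\lambda'}\, M(\eta, K) \int_0^t u(s)\, ds,
\]
so Gronwall's lemma gives
\[
\|h_t\|_{L^1(m)} \leq C_{\lambda'} \|h_0\|_{L^1(m)} \, \exp\!\bigl(-(\lambda' - C_{\lambda'} M(\eta, K))\, t \bigr).
\]
Choosing $\eta$ small enough (depending on $K$, $\lambda$ and $\lambda'$) so that $C_{\lambda'}\, M(\eta, K) \leq \lambda' - \lambda$ concludes the proof. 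The only delicate point, rather than a genuine obstacle, is the trade-off between the target rate $\lambda$, the constant $C_{\lambda'}$ from Theorem~\ref{th:mainlinear} which may deteriorate as $\lambda' \uparrow \min(\lambda_0, \lambda_k)$, and the smallness threshold $\eta$: once $\lambda'$ is fixed strictly below the linear spectral gap, shrinking $\eta$ absorbs the super-linear bilinear error into the linear exponential decay.
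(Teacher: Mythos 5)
Your proof is correct and follows the same skeleton as the paper (Duhamel's formula for~(\ref{eq:ht}), the spectral gap of Theorem~\ref{th:mainlinear}, the bilinear bound of Corollary~\ref{cor:bilinear2}, and the observation that $\Pi h_t=0$ and $\Pi Q(h_s,h_s)=0$ so the decay estimate applies to each Duhamel term), but it closes the argument by a genuinely different final step. The paper keeps the superlinear factor: it groups the corollary as $\bigl(\|h_s\|_{L^1(m)}^{1/4}\|h_s\|_{H^4(\langle v \rangle^{4\gamma+k+6})}^{1/2}+\|h_s\|_{L^1(\langle v \rangle^{2\gamma}m)}^{3/4}\bigr)\|h_s\|_{L^1(m)}^{5/4}$, bounds the prefactor by $K^{1/2}\eta^{1/4}+\eta^{3/4}$, and then invokes the nonlinear integral-inequality lemma of \cite{Mou2} (Lemma~4.5 there) to conclude at the same rate $\lambda$ used in the linear estimate. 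You instead use the a priori smallness $\|h_s\|_{L^1(m)}\le\|h_s\|_{L^1(\langle v \rangle^{2\gamma}m)}\le\eta$ to peel off $\eta^{1/2}$ and reduce the bilinear term to $M(\eta,K)\,\|h_s\|_{L^1(m)}$, which is linear in the unknown, so the classical Gronwall lemma suffices. What your route buys is a self-contained, elementary closure with no external lemma; what it costs is a rate loss $\lambda'-C_{\lambda'}M(\eta,K)$, which you repair by working at an auxiliary rate $\lambda'\in(\lambda,\min(\lambda_0,\lambda_k))$ and shrinking $\eta$ accordingly, so your smallness threshold depends on the target rate $\lambda$ (through $\lambda'$ and $C_{\lambda'}$, which may degenerate as $\lambda'\uparrow\min(\lambda_0,\lambda_k)$), whereas the paper's superlinear treatment retains the linear rate directly; in practice both versions need $\eta$ small in terms of the constants of Theorem~\ref{th:mainlinear}, so this is a presentational trade-off rather than a defect. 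One small point worth making explicit (as you do, and the paper does only implicitly through $\Pi h_t=0$) is that $\Pi Q(h_s,h_s)=0$ because the spectral projector acts by integration against the collision invariants $1, v, |v|^2$, whose moments of $Q$ vanish and are finite here since $Q(h_s,h_s)\in L^1(\langle v\rangle^k)$ with $k>2$.
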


\begin{proof}
We use Duhamel's formula for the solution of~(\ref{eq:ht}):
$$
h_t = S_\LL(t) \,h_0 + \int_0^t S_\LL(t-s) \, Q(h_s, h_s) \, ds.
$$
We now estimate $\|h_t\|_{L^1(m)}$ thanks to Theorem~\ref{th:mainlinear} and Corollary~\ref{cor:bilinear2}:
$$
\begin{aligned}
\|h_t\|_{L^1(m)} 
&\le e^{-\lambda t } \|h_0\|_{L^1(m)} \\
&\quad+  C \int_0^t e^{-\lambda (t-s)} \Big(\|h_s\|^{1/4}_{L^1( m)} \,  \|h_s\|^{1/2}_{H^4(\langle v \rangle^{4\gamma+k+6})}  +\|h_s\|_{L^1(\langle v \rangle^{2\gamma} m)}^{3/4}   \Big) \|h_s\|^{5/4}_{L^1(m)} \, ds \\
&\le e^{-\lambda t } \, \|h_0\|_{L^1(m)} + C \int_0^t e^{-\lambda (t-s)} \left( K^{1/2} \eta^{1/4} + \eta^{3/4} \right) \|h_s\|^{5/4}_{L^1(m)} \, ds.
\end{aligned}
$$
We denote $\eta':=C \left( K^{1/2} \eta^{1/4} + \eta^{3/4} \right)$. We end up with a similar differential inequality as in \linebreak \cite[~Lemma~4.5]{Mou2}. We can then conclude in the same way that 
$$
\forall \, t \geq 0, \quad \|h_t\|_{L^1(m)} \le C' e^{-\lambda t} \|h_0\|_{L^1(m)},
$$
for some $C'>0$.
\end{proof}

\smallskip
To conclude the proof of Theorem~\ref{th:main}, we consider $\eta>0$ defined in Theorem~\ref{th:nonlinearstability}. Using Theorem~\ref{th:polydecay}, we can choose $t_1>0$ such that 
$$
\forall \, t \geq t_1, \quad \|h_t\|_{L^1(m)} = \|f_t - \mu\|_{L^1(m)} \le \eta.
$$
Thanks to the properties of a smooth solution, we also have 
$$ 
\forall \, t \geq t_1, \quad \|h_t\|_{H^4(\langle v \rangle^{4 \gamma + k + 6})} \leq  \|f_t\|_{H^4(\langle v \rangle^{4 \gamma + k + 6})} +  \|\mu\|_{H^4(\langle v \rangle^{4 \gamma + k + 6})} \leq K
$$ 
for some $K>0$. We can hence apply Theorem~\ref{th:nonlinearstability} to $h_t$ starting from $t_1$. We finally obtain
$$
\forall \, t \geq t_1, \quad \|f_t - \mu\|_{L^1(m)} \leq C' e^{-\lambda t} \|h_{t_1}\|_{L^1(m)} \leq C'' e^{-\lambda t},
$$
for some $C''>0$. The conclusion of Theorem~\ref{th:main} is hence established. 

\bigskip
\bibliographystyle{acm}

\begin{thebibliography}{}

\end{thebibliography}


\begin{thebibliography}{10}

\bibitem{ADVW}
{\sc Alexandre, R., Desvillettes, L., Villani, C., and Wennberg, B.}
\newblock Entropy dissipation and long-range interactions.
\newblock {\em Arch. Ration. Mech. Anal. 152}, 4 (2000), 327--355.

\bibitem{Ark}
{\sc Arkeryd, L.}
\newblock Intermolecular forces of infinite range and the {B}oltzmann equation.
\newblock {\em Arch. Rational Mech. Anal. 77}, 1 (1981), 11--21.

\bibitem{BF}
{\sc Bally, V., and Fournier, N.}
\newblock Regularization properties of the 2{D} homogeneous {B}oltzmann
  equation without cutoff.
\newblock {\em Probab. Theory Related Fields 151}, 3-4 (2011), 659--704.

\bibitem{BM}
{\sc Baranger, C., and Mouhot, C.}
\newblock Explicit spectral gap estimates for the linearized {B}oltzmann and
  {L}andau operators with hard potentials.
\newblock {\em Rev. Mat. Iberoamericana 21}, 3 (2005), 819--841.

\bibitem{Bob}
{\sc Bobylev, A.~V.}
\newblock Moment inequalities for the {B}oltzmann equation and applications to
  spatially homogeneous problems.
\newblock {\em J. Statist. Phys. 88}, 5-6 (1997), 1183--1214.

\bibitem{BGP}
{\sc Bobylev, A.~V., Gamba, I.~M., and Panferov, V.~A.}
\newblock Moment inequalities and high-energy tails for {B}oltzmann equations
  with inelastic interactions.
\newblock {\em J. Statist. Phys. 116}, 5-6 (2004), 1651--1682.

\bibitem{Carl}
{\sc Carleman, T.}
\newblock Sur la th\'eorie de l'\'equation int\'egrodiff\'erentielle de
  {B}oltzmann.
\newblock {\em Acta Math. 60}, 1 (1933), 91--146.

\bibitem{CC1}
{\sc Carlen, E.~A., and Carvalho, M.~C.}
\newblock Strict entropy production bounds and stability of the rate of
  convergence to equilibrium for the {B}oltzmann equation.
\newblock {\em J. Statist. Phys. 67}, 3-4 (1992), 575--608.

\bibitem{CC2}
{\sc Carlen, E.~A., and Carvalho, M.~C.}
\newblock Entropy production estimates for {B}oltzmann equations with
  physically realistic collision kernels.
\newblock {\em J. Statist. Phys. 74}, 3-4 (1994), 743--782.

\bibitem{Carr}
{\sc Carrapatoso, K.}
\newblock On the trend to equilibrium for the homogeneous landau equation with
  hard potentials.
\newblock {\em http://hal.archives-ouvertes.fr/hal-00851757\/} (2013).

\bibitem{Cer}
{\sc Cercignani, C.}
\newblock {\em The {B}oltzmann equation and its applications}, vol.~67 of {\em
  Applied Mathematical Sciences}.
\newblock Springer-Verlag, New York, 1988.

\bibitem{CIP}
{\sc Cercignani, C., Illner, R., and Pulvirenti, M.}
\newblock {\em The mathematical theory of dilute gases}, vol.~106 of {\em
  Applied Mathematical Sciences}.
\newblock Springer-Verlag, New York, 1994.

\bibitem{CH}
{\sc Chen, Y., and He, L.}
\newblock Smoothing estimates for {B}oltzmann equation with full-range
  interactions: spatially homogeneous case.
\newblock {\em Arch. Ration. Mech. Anal. 201}, 2 (2011), 501--548.

\bibitem{Desv4}
{\sc Desvillettes, L.}
\newblock On asymptotics of the boltzmann equation when the collisions become
  grazing.
\newblock {\em Transport Theory and Statistical Physics 21}, 3 (1992),
  259--276.

\bibitem{Desv1}
{\sc Desvillettes, L.}
\newblock Some applications of the method of moments for the homogeneous
  {B}oltzmann and {K}ac equations.
\newblock {\em Arch. Rational Mech. Anal. 123}, 4 (1993), 387--404.

\bibitem{DM}
{\sc Desvillettes, L., and Mouhot, C.}
\newblock Stability and uniqueness for the spatially homogeneous {B}oltzmann
  equation with long-range interactions.
\newblock {\em Arch. Ration. Mech. Anal. 193}, 2 (2009), 227--253.

\bibitem{Elm}
{\sc Elmroth, T.}
\newblock Global boundedness of moments of solutions of the {B}oltzmann
  equation for forces of infinite range.
\newblock {\em Arch. Rational Mech. Anal. 82}, 1 (1983), 1--12.

\bibitem{Four}
{\sc Fournier, N.}
\newblock Finiteness of entropy for the homogeneous boltzmann equation with
  measure initial condition.
\newblock {\em To appear in Ann. Appl. Probab.\/}.

\bibitem{FM}
{\sc Fournier, N., and Mouhot, C.}
\newblock On the well-posedness of the spatially homogeneous {B}oltzmann
  equation with a moderate angular singularity.
\newblock {\em Comm. Math. Phys. 289}, 3 (2009), 803--824.

\bibitem{Gou}
{\sc Goudon, T.}
\newblock On {B}oltzmann equations and {F}okker-{P}lanck asymptotics: influence
  of grazing collisions.
\newblock {\em J. Statist. Phys. 89}, 3-4 (1997), 751--776.

\bibitem{GS}
{\sc Gressman, P.~T., and Strain, R.~M.}
\newblock Global classical solutions of the {B}oltzmann equation without
  angular cut-off.
\newblock {\em J. Amer. Math. Soc. 24}, 3 (2011), 771--847.

\bibitem{GMM}
{\sc Gualdani, M.~P., Mischler, S., and Mouhot, C.}
\newblock Factorization for non-symmetric operators and exponential
  {H}-theorem.
\newblock {\em http://hal.archives-ouvertes.fr/ccsd-00495786\/} (11-19-2013).

\bibitem{Kato}
{\sc Kato, T.}
\newblock {\em Perturbation theory for linear operators}.
\newblock Classics in Mathematics. Springer-Verlag, Berlin, 1995.
\newblock Reprint of the 1980 edition.

\bibitem{Klaus}
{\sc Klaus, M.}
\newblock Boltzmann collision operator without cut-off.
\newblock {\em Helv. Phys. Acta 50}, 6 (1977), 893--903.

\bibitem{Max}
{\sc Maxwell, J.~C.}
\newblock On the dynamical theory of gases.
\newblock {\em Philosophical Transactions of the Royal Society of London 157\/}
  (1867), 49--88.

\bibitem{MM2}
{\sc Mischler, S., and Mouhot, C.}
\newblock Semigroup factorisation in banach spaces and kinetic hypoelliptic
  equations.
\newblock {\em In preparation\/}.

\bibitem{MM1}
{\sc Mischler, S., and Mouhot, C.}
\newblock Stability, convergence to self-similarity and elastic limit for the
  {B}oltzmann equation for inelastic hard spheres.
\newblock {\em Comm. Math. Phys. 288}, 2 (2009), 431--502.

\bibitem{MisS}
{\sc Mischler, S., and Scher, J.}
\newblock Spectral analysis of semigroups and growth-fragmentation equations.
\newblock {\em http://hal.archives-ouvertes.fr/hal-00877870\/} (10-29-2013).

\bibitem{MW}
{\sc Mischler, S., and Wennberg, B.}
\newblock On the spatially homogeneous {B}oltzmann equation.
\newblock {\em Ann. Inst. H. Poincar\'e Anal. Non Lin\'eaire 16}, 4 (1999),
  467--501.

\bibitem{Mou1}
{\sc Mouhot, C.}
\newblock Quantitative lower bounds for the full {B}oltzmann equation. {I}.
  {P}eriodic boundary conditions.
\newblock {\em Comm. Partial Differential Equations 30}, 4-6 (2005), 881--917.

\bibitem{Mou3}
{\sc Mouhot, C.}
\newblock Explicit coercivity estimates for the linearized {B}oltzmann and
  {L}andau operators.
\newblock {\em Comm. Partial Differential Equations 31}, 7-9 (2006),
  1321--1348.

\bibitem{Mou2}
{\sc Mouhot, C.}
\newblock Rate of convergence to equilibrium for the spatially homogeneous
  {B}oltzmann equation with hard potentials.
\newblock {\em Comm. Math. Phys. 261}, 3 (2006), 629--672.

\bibitem{MS}
{\sc Mouhot, C., and Strain, R.~M.}
\newblock Spectral gap and coercivity estimates for linearized {B}oltzmann
  collision operators without angular cutoff.
\newblock {\em J. Math. Pures Appl. (9) 87}, 5 (2007), 515--535.

\bibitem{Pao}
{\sc Pao, Y.~P.}
\newblock Boltzmann collision operator with inverse-power intermolecular
  potentials. {I}, {II}.
\newblock {\em Comm. Pure Appl. Math. 27\/} (1974), 407--428; ibid. 27 (1974),
  559--581.

\bibitem{PulWenn}
{\sc Pulvirenti, A., and Wennberg, B.}
\newblock A {M}axwellian lower bound for solutions to the {B}oltzmann equation.
\newblock {\em Comm. Math. Phys. 183}, 1 (1997), 145--160.

\bibitem{ToscVill2}
{\sc Toscani, G., and Villani, C.}
\newblock Sharp entropy dissipation bounds and explicit rate of trend to
  equilibrium for the spatially homogeneous {B}oltzmann equation.
\newblock {\em Comm. Math. Phys. 203}, 3 (1999), 667--706.

\bibitem{Tris}
{\sc Tristani, I.}
\newblock Fractional fokker-planck equation.
\newblock {\em http://hal.archives-ouvertes.fr/hal-00914059\/} (12-04-2013).

\bibitem{Vill3}
{\sc Villani, C.}
\newblock On a new class of weak solutions to the spatially homogeneous
  {B}oltzmann and {L}andau equations.
\newblock {\em Arch. Rational Mech. Anal. 143}, 3 (1998), 273--307.

\bibitem{Vill2}
{\sc Villani, C.}
\newblock A review of mathematical topics in collisional kinetic theory.
\newblock In {\em Handbook of mathematical fluid dynamics, {V}ol. {I}}.
  North-Holland, Amsterdam, 2002, pp.~71--305.

\bibitem{Vill1}
{\sc Villani, C.}
\newblock Cercignani's conjecture is sometimes true and always almost true.
\newblock {\em Comm. Math. Phys. 234}, 3 (2003), 455--490.

\bibitem{Wenn2}
{\sc Wennberg, B.}
\newblock The {P}ovzner inequality and moments in the {B}oltzmann equation.
\newblock In {\em Proceedings of the {VIII} {I}nternational {C}onference on
  {W}aves and {S}tability in {C}ontinuous {M}edia, {P}art {II} ({P}alermo,
  1995)\/} (1996), no.~45, part II, pp.~673--681.

\bibitem{Wenn}
{\sc Wennberg, B.}
\newblock Entropy dissipation and moment production for the {B}oltzmann
  equation.
\newblock {\em J. Statist. Phys. 86}, 5-6 (1997), 1053--1066.

\end{thebibliography}

\end{document}